\numberwithin{equation}{section}
\DeclareMathSymbol{\twoheadrightarrow} {\mathrel}{AMSa}{"10}
\DeclareMathOperator*{\Div}{div}
\DeclareMathOperator*{\tr}{tr}
\DeclareMathOperator*{\sign}{sign}
\DeclareMathOperator*{\Ric}{Ric}
\DeclareMathOperator*{\Hess}{Hess}
\newcounter{thmcounter}
\numberwithin{thmcounter}{section}  
\newtheorem{thm}[thmcounter]{Theorem}
\newtheorem{lem}[thmcounter]{Lemma}
\newtheorem{cor}[thmcounter]{Corollary}
\theoremstyle{definition}
\newtheorem{rem}[thmcounter]{Remark}
\begin{document}

\title[Gradient estimates and Harnack inequalities]{Gradient estimates and Harnack inequalities of a parabolic equation under geometric flow}
\author[G. Zhao]{Guangwen Zhao}
\address{Guangwen Zhao, School of Mathematical Sciences, Fudan University, Shanghai 200433, China}
\email{gwzhao@fudan.edu.cn}
\maketitle

\begin{abstract}
  In this paper, we consider a manifold evolving by a general geometric flow and study parabolic equation
  \[
  (\Delta -q(x,t)-\partial_t)u(x,t)=A(u(x,t)),\quad (x,t)\in M\times [0,T].
  \]
  We establish space-time gradient estimates for positive solutions and elliptic type gradient estimates for bounded positive solutions of this equation. By integrating the gradient estimates, we derive the corresponding Harnack inequalities. Finally, as applications, we give gradient estimates of some specific parabolic equations.
\end{abstract}     

\footnotetext{2010 \emph{Mathematics Subject Classification}. Primary 53C44; Secondary 35K55, 53C21.}
\footnotetext{\emph{Keywords}. Parabolic equation, Gradient estimate, Harnack inequality, Geometric flow.}

\section{Introduction}

The paper study parabolic equation 
\begin{equation}\label{eqa}
  \begin{split}
    (\Delta -q(x,t)-\partial_t)u(x,t)=A(u(x,t))
  \end{split}
\end{equation}
on Riemannian manifold $M$ evolving by the geometric flow 
\begin{equation}\label{eqb}
  \begin{split}
    \frac{\partial }{\partial t}g(t)=2h(t),
  \end{split}
\end{equation}
where $(x,t)\in M\times [0,T]$, $q(x,t)$ is a function on $M\times [0,T]$ of $C^2$ in $x$-variables and $C^1$ in $t$-variable, $A(u)$ is a function of $C^2$ in $u$, and $h(t)$ is a symmetric $(0,2)$-tensor field on $(M,g(t))$. A important example would be the case where $h(t)=-\Ric (t)$ and $g(t)$ is a solution of the Ricci flow introduced by R.S. Hamilton \cite{hamilton1982Three}. We will give some gradient estimates and Harnack inequalities for positive solutions of equation \eqref{eqa}. 

The study of gradient estimates for parabolic equations originated with the work of P. Li and S.-T. Yau \cite{li1986parabolic}. They prove a space-time gradient estimate for positive solutions of the heat equation on a complete manifold. By integrating the gradient estimate along a space-time path, a Harnack inequality was derived. Therefore, Li--Yau inequality is often called differential Harnack inequality. It is easy to see that the above space-time estimate will become an elliptic type gradient estimate for a time-independent solution (see \cite{cheng1975differential}). But the elliptic type estimate cannot hold for a time-dependent solution in general, this can be seen from the form of the fundamental solution of the heat equation in $\mathbb R^n$. However, in 1993, R.S. Hamilton \cite{hamilton1993matrix} established an elliptic type gradient estimate for positive solutions of the heat equation on compact manifolds. It is worth noting that the noncompact version of Hamilton's estimate is not true even for $\mathbb R^n$ (see \cite[Remark~1.1]{souplet2006sharp}). Nevertheless, for complete noncompact manifolds, P. Souplet and Q.S. Zhang \cite{souplet2006sharp} obtained an elliptic type gradient estimate for a bounded positive solution of the heat equation after inserting a necessary logarithmic correction term. Li--Yau type and Hamilton--Souplet--Zhang type gradient estimates have been obtained for other nonlinear equations on manifolds, see for example \cite{chen2009gradient,chen2016gradient,dung2015gradient,li1991Gradient,li2015li,ma2006gradient,negrin1995Gradient,ruan2007elliptic,wu2015elliptic,wu2017Elliptic,yang2008gradient} and the references therein.

On the other hand, gradient estimates are very powerful tools in geometric analysis. For instance, R.S. Hamilton \cite{hamilton1993The,hamilton1995Harnack} established differential Harnack inequalities for the Ricci flow and the mean curvature flow. These results have important applications in the singularity analysis. Over the past two decades, many authors used similar techniques to prove gradient estimates and Harnack inequalities for geometric flows. The list of relevant references includes but is not limited to \cite{bailesteanu2010Gradient,cao2009Differential,guo2014Harnack,ishida2014Geometric,li2018li,li2016Harnack,liu2009Gradient,ni2004Ricci,sun2011Gradient,zhang2006Some,zhao2016Gradient}. In this paper, we follow the work of J. Sun \cite{sun2011Gradient} and M. Bailesteanu et al. \cite{bailesteanu2010Gradient}, and focus on the system \eqref{eqa}--\eqref{eqb}. 

Now we give some remarks on equation \eqref{eqa}. When $A(u)=au\log u$, the nonlinear elliptic equation corresponding to \eqref{eqa} is related to the gradient Ricci soliton. When $A(u)=au^\beta $, the nonlinear elliptic equation corresponding to \eqref{eqa} is related to the Yamabe-type equation. In general, the parabolic equation \eqref{eqa} is the so-called reaction-diffusion equation, which can be found in many mathematical models in physics, chemistry and biology (see \cite{rothe1984Global,Smoller1983Shock}), where $qu+A(u)$ and $\Delta u$ are the reaction term and the diffusion term, respectively. The reaction-diffusion equations are very important objects in pure and applied mathematics. 

In \cite{chen2018Li}, Q. Chen and the author studied the equation \eqref{eqa} with a convection term on a complete manifold with a fixed metric. Here, we establish some gradient estimates for positive solutions of \eqref{eqa} under geometric flow \eqref{eqb}, which are richer and sharper than \cite{chen2018Li}.

The rest of this paper is organized as follows.

In Section~\ref{sec2}, we establish space-time gradient estimates for positive solution of \eqref{eqa}. We firstly consider that $M$ is a complete noncompact manifold without boundary. A local and a global estimate were established, see Theorem~\ref{tha} and Corollary~\ref{coa}. Next, the case that $M$ is closed is also deal with. In this case, inspired by \cite{bailesteanu2010Gradient}, we obtain a sharper estimate than \cite[Theorem~6]{sun2011Gradient}, see Theorem~\ref{thb}. We also give the corresponding Harnack inequalities in the above two cases, see Corollary~\ref{coc}.

In Section~\ref{sec3}, we consider the case that the solution is bounded, and establish elliptic type gradient estimates of local and global versions, see Theorem~\ref{thc} and Corollary~\ref{cob}. The elliptic type Harnack inequality is also obtained, see Corollary~\ref{cod}.  

Finally, in Section~\ref{sec4}, we give some applications and explanations of these gradient estimates in some specific cases. For the case of $A(u)=au\log u$ with $a\in \mathbb R$, we can derive the gradient estimate for positive solutions. In particular, we deal with the case that the manifold evolving by the Ricci flow, see Corollary~\ref{coe}, \ref{cof}, \ref{cog}. For the case of $A(u)=au^\beta $ with $a\in \mathbb R$ and $\beta \in (-\infty ,0]\cup [1,+\infty)$, we give the gradient estimate for bounded positive solutions, see Corollary~\ref{coh}, \ref{coi}.

Throughout the paper, we denote by $n$ the dimension of the manifold $M$, and by $d(x,y,t)$ the geodesic distance between $x,y\in M$ under $g(t)$. When we say that $u(x,t)$ is a solution to the equation \eqref{eqa}, we mean $u$ is a solution which is smooth in $x$-variables and $t$-variable. In addition, we have to give some notations for the convenience of writing. Let $f=\log u$ and $\hat A(f)=\frac{A(u)}{u}$. Then 
\[
\hat A_f=A'(u)-A(u)/u,\quad \hat A_{ff}=uA''(u)-A'(u)+A(u)/u.
\]
For $u>0$ we define several nonnegative real numbers (some of $\lambda , \Lambda ,\Sigma ,\kappa $ are allowed to be infinite) as follows: 
\begin{align*}
  &\lambda_{2R}:=-\min_{Q_{2R,T}}\hat A_f^-=-\min \left\{0,\min_{Q_{2R,T}}(A'(u)-A(u)/u)\right\},\\
  &\Lambda_{2R}:=\max_{Q_{2R,T}}\hat A_f^+=\max \left\{0,\max_{Q_{2R,T}}(A'(u)-A(u)/u)\right\},\\
  &\Sigma_{2R}:=\max_{Q_{2R,T}}\hat A_{ff}^+=\max \left\{0,\max_{Q_{2R,T}}(uA''(u)-A'(u)+A(u)/u)\right\}\\
  &\kappa_{2R}:=-\min \left\{0,\min_{Q_{2R,T}}\left(A'(u)-A(u)/u\right),\min_{Q_{2R,T}}A'(u)\right\}
\end{align*}
and
\begin{align*}
  &\lambda :=-\inf_{M\times [0,T]}\hat A_f^-=-\min \left\{0,\inf_{M\times [0,T]}(A'(u)-A(u)/u)\right\},\\
  &\Lambda :=\sup_{M\times [0,T]}\hat A_f^+=\max \left\{0,\sup_{M\times [0,T]}(A'(u)-A(u)/u)\right\},\\
  &\Sigma :=\sup_{M\times [0,T]}\hat A_{ff}^+=\max \left\{0,\sup_{M\times [0,T]}(uA''(u)-A'(u)+A(u)/u)\right\}\\
  &\kappa :=-\min \left\{0,\inf_{Q_{2R,T}}\left(A'(u)-A(u)/u\right),\inf_{Q_{2R,T}}A'(u)\right\}.
\end{align*}
Here, we denote by $v^+=\max \{0,v\}$ and $v^-=\min \{0,v\}$ the positive part and the negative part of a function $v$. Notice that if $M$ is compact, then $\lambda ,\Lambda ,\Sigma $ and $\kappa $ must be finite.

\section{space-time gradient estimates for positive solutions}\label{sec2}

Firstly, we have the following local space-time gradient estimate for \eqref{eqa}--\eqref{eqb}.
\begin{thm}\label{tha}
  Let $(M,g(0))$ be a complete Riemannian manifold, and let $g(t)$ evolves by \eqref{eqb} for $t\in [0,T]$. Given $x_0$ and $R>0$, let $u$ be a positive solution to \eqref{eqa} in the cube $Q_{2R,T}:=\{(x,t):d(x,x_0,t)\le 2R, 0\le t\le T\}$. Suppose that there exist constants $K_1, K_2, K_3, K_4,\gamma ,\theta \ge 0$ such that 
  \[
  \Ric \ge -K_1g,\quad -K_2g\le h\le K_3g,\quad |\nabla h|\le K_4
  \]
  and
  \[
  |\nabla q|\le \gamma_{2R},\quad \Delta q\le \theta_{2R} 
  \]
  on $Q_{2R,T}$. Then for any $\alpha >1$ and $0<\varepsilon <1$, we have 
  \begin{equation}\label{eql}
    \begin{split}
      &\frac{|\nabla u(x,t)|^2}{u^2(x,t)}-\alpha \frac{u_t(x,t)}{u(x,t)}-\alpha q(x,t)-\alpha \frac{A(u(x,t))}{u(x,t)}\\
      \le &\frac{n\alpha^2}{t}+\frac{C\alpha^2}{R^2}\left(\frac{\alpha^2}{\alpha -1}+\sqrt{K_1}R\right)+C\alpha^2K_2+n\alpha^2\lambda_{2R}\\
      &+\Bigg\{n\alpha^2\bigg[\alpha \theta_{2R}+n\alpha^2\max \{K_2^2,K_3^2\}\\
      &\ \qquad +\frac{n\alpha^2}{4(1-\varepsilon )(\alpha -1)^2}\Big((\alpha -1)\Lambda_{2R}+\alpha \Sigma_{2R}+2(K_1+(\alpha -1)K_3+K_4)\Big)^2\\
      &\ \qquad +\frac{9}{8}n\alpha^2K_4+\frac{3}{4}\left(\frac{4n\alpha^2}{\varepsilon }\right)^{\frac{1}{3}}(\alpha -1)^{\frac{2}{3}}\gamma_{2R}^{\frac{4}{3}}\bigg]\Bigg\}^{\frac{1}{2}}
    \end{split}
  \end{equation}
  on $Q_{R,T}$, where $C$ is a constant that depends only on $n$. 
\end{thm}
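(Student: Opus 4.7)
The plan is to run a Li--Yau style maximum-principle argument for the auxiliary quantity
\[
F = |\nabla f|^2 - \alpha\bigl(f_t + q + \hat A(f)\bigr),\qquad f:=\log u,
\]
localized by a space-time cutoff $\phi$ supported in $Q_{2R,T}$ with $\phi\equiv 1$ on $Q_{R,T}$. In terms of $f$, \eqref{eqa} reads $\Delta f = f_t - |\nabla f|^2 + q + \hat A(f)$, so the estimate \eqref{eql} is exactly $F\le\text{RHS}$ on $Q_{R,T}$.

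The first and most computational step is to derive a differential inequality for $(\Delta-\partial_t)F$. For $|\nabla f|^2$ I would apply Bochner together with the moving-metric identities
\[
\partial_t|\nabla f|^2 = -2h(\nabla f,\nabla f) + 2\langle\nabla f,\nabla f_t\rangle,
\]
\[
\partial_t\Delta f = \Delta f_t - 2\langle h,\nabla^2 f\rangle - 2\bigl\langle\Div h-\tfrac12\nabla\tr h,\nabla f\bigr\rangle,
\]
which is where the constants $K_2,K_3,K_4$ enter. For the nonlinearity one has $\partial_t\hat A(f)=\hat A_f f_t$ and $\Delta\hat A(f)=\hat A_f\Delta f+\hat A_{ff}|\nabla f|^2$, which is how $\Lambda_{2R},\Sigma_{2R},\lambda_{2R}$ appear. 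Substituting $\Delta f = f_t-|\nabla f|^2+q+\hat A(f)$ from the equation and using $|\nabla^2 f|^2\ge(\Delta f)^2/n$, I expect an inequality of the schematic form
\[
(\Delta-\partial_t)F \ge \tfrac{2}{n\alpha^2}\bigl(F-(\alpha-1)|\nabla f|^2\bigr)^2 - 2\langle\nabla f,\nabla F\rangle - 2\hat A_f|\nabla f|^2 - \mathcal E,
\]
where $\mathcal E$ bundles the Ricci, $h$, $\nabla h$, $\nabla q$ and $\Delta q$ contributions.

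Next I would introduce $\phi=\eta(d(x,x_0,t)/R)$ with the usual Li--Yau bump $\eta$ ($-C\eta^{1/2}\le\eta'\le 0$, $|\eta''|\le C$). Laplacian comparison under $\Ric\ge-K_1g$ yields $\Delta\phi\ge -\tfrac{C}{R^2}(1+\sqrt{K_1}R)$, while Hamilton's argument for the evolution of the distance function under $\partial_t g=2h$ with $h\ge-K_2g$ gives $\partial_t d\le CK_2 d$, producing the linear $C\alpha^2K_2$ term. Applying the maximum principle to $G:=t\phi F$ on $Q_{2R,T}$: at an interior maximum $(x_1,t_1)$ we have $G_t\ge0$, $\nabla G=0$, $\Delta G\le0$, which combined with $\nabla F=-(F/\phi)\nabla\phi$ and the differential inequality above yields a quadratic inequality in $\phi F$ evaluated at $(x_1,t_1)$.

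The main obstacle will be the algebraic bookkeeping. Every correction from the time-dependent metric ($-2h(\nabla f,\nabla f)$, the $[\partial_t,\Delta]$ commutator, the $\partial_t d$ contribution), together with the $\hat A_f$, $\hat A_{ff}$ and $\nabla q$ pieces, has to be regrouped so that the ``good'' part proportional to $|\nabla f|^2$ is absorbed into $F$ with the sharp factor $\alpha/(\alpha-1)$, while the residual linear-in-$|\nabla f|$ terms are handled by Young's inequality with the free parameter $\varepsilon$: the cubic coupling $\langle\nabla f,\nabla q\rangle$ is split via $ab\le\tfrac34\varepsilon^{1/3}a^{4/3}+\tfrac14\varepsilon^{-1}b^4$, producing the $\tfrac34(4n\alpha^2/\varepsilon)^{1/3}(\alpha-1)^{2/3}\gamma_{2R}^{4/3}$ piece, and the squared grouping $(\alpha-1)\Lambda_{2R}+\alpha\Sigma_{2R}+2(K_1+(\alpha-1)K_3+K_4)$ comes out with the factor $1/(4(1-\varepsilon)(\alpha-1)^2)$ dictated by the absorption. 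The resulting inequality has the form $a(\phi F)^2\le b(\phi F)+c$ with $a,b,c$ explicit; solving by the quadratic formula and applying $\sqrt{x+y}\le\sqrt x+\sqrt y$ to separate the $t^{-1}$, $R^{-2}$, $K_2$ and $\lambda_{2R}$ pieces from the bulk term produces the RHS of \eqref{eql}, with $n\alpha^2\lambda_{2R}$ emerging as the linear absorption of the negative part of $2\hat A_f|\nabla f|^2$ outside the square root.
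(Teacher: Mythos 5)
Your proposal follows essentially the same route as the paper: the same auxiliary quantity $t\eta\bigl(|\nabla f|^2-\alpha f_t-\alpha q-\alpha\hat A\bigr)$, the same evolving-metric identities for $\partial_t|\nabla f|^2$ and $[\partial_t,\Delta]$ (the paper's Lemma~\ref{lea}, yielding its Lemma~\ref{leb}), Laplacian comparison plus the $K_2$-control of $\partial_t d$ for the cutoff, and the same Young-with-$\varepsilon$ absorptions leading to a quadratic inequality in $\eta F$ solved by the quadratic formula. Only two transcription-level slips: the squared term should read $\bigl(F+(\alpha-1)|\nabla f|^2\bigr)^2$ (the identity $|\nabla f|^2-f_t-q-\hat A=\tfrac1\alpha F+\tfrac{\alpha-1}{\alpha}|\nabla f|^2$ forces the plus sign, which is also what absorbs the cutoff-gradient term), and the distance bound actually needed is the lower bound $\partial_t d\ge -CK_2 d$ coming from $h\ge -K_2g$, since $\phi'\le 0$.
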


\begin{rem}
  We see that Theorem~\ref{tha} covers \cite[Theorem~1]{sun2011Gradient}. In fact, when $q(x,t)=A(u)=0$, from Theorem~\ref{tha} we can get
  \begin{equation*}
    \begin{split}
      \frac{|\nabla u(x,t)|^2}{u^2(x,t)}-\alpha \frac{u_t(x,t)}{u(x,t)}\le &\frac{n\alpha^2}{t}+\frac{C\alpha^2}{R^2}\left(\frac{\alpha^2}{\alpha -1}+\sqrt{K_1}R\right)+C\alpha^2K_2\\
      &+\Bigg\{n\alpha^2\bigg[n\alpha^2\max \{K_2^2,K_3^2\}+\frac{9}{8}n\alpha^2K_4\\
      &+\frac{n\alpha^2}{(1-\varepsilon )(\alpha -1)^2}(K_1+(\alpha -1)K_3+K_4)^2\bigg]\Bigg\}^{\frac{1}{2}}.
    \end{split}
  \end{equation*} 
  Let $\varepsilon \to 0+$, we thus get
  \begin{equation*}
    \begin{split}
      \frac{|\nabla u(x,t)|^2}{u^2(x,t)}-\alpha \frac{u_t(x,t)}{u(x,t)}\le &\frac{n\alpha^2}{t}+\frac{C\alpha^2}{R^2}\left(R\sqrt{K_1}+\frac{\alpha^2}{\alpha -1}\right)+C\alpha^2K_2\\
      &+\frac{n\alpha^2}{\alpha -1}(K_1+(\alpha -1)K_3+K_4)\\
      &+n\alpha^2\left(\max \{K_2,K_3\}+\sqrt{9K_4/8}\right)^2.
    \end{split}
  \end{equation*} 
\end{rem}

To prove Theorem~\ref{tha}, we need the following two lemmas. Let $f=\log u$, by \eqref{eqa} we know that $f$ satisfies 
\begin{equation}\label{eqc}
  \begin{split}
    \Delta f=f_t+q+\frac{A(u)}{u}-|\nabla f|^2=f_t+q+\hat A(f)-|\nabla f|^2
  \end{split}
\end{equation}
Set $F=t(|\nabla f|^2-\alpha f_t-\alpha q-\alpha \hat A)$. We have
\begin{lem}[Lemma~3 in \cite{sun2011Gradient}]\label{lea}
  Suppose the metric evolves by \eqref{eqb}. Then for any smooth function $f$, we have
  \[
  \frac{\partial }{\partial t}|\nabla f|^2=-2h(\nabla f,\nabla f)+2\langle \nabla f,\nabla (f_t)\rangle 
  \]
  and 
  \begin{equation*}
    \begin{split}
      (\Delta f)_t=\Delta(f_t)-2\langle h,\Hess f\rangle -2\langle \Div h-\tfrac{1}{2}\nabla ({\tr}_gh),\nabla f\rangle ,
    \end{split}
  \end{equation*}
  where $\Div h$ is the divergence of $h$.
\end{lem}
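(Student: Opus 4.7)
The plan is to compute both evolution identities directly in local coordinates, leveraging the two elementary consequences of the flow equation $\partial_t g_{ij} = 2h_{ij}$: differentiating $g^{ik}g_{kj} = \delta^i_j$ yields $\partial_t g^{ij} = -2h^{ij}$, and the variation of the Christoffel symbols is given by the standard formula
\[
\partial_t \Gamma_{ij}^k = g^{kl}\bigl(\nabla_i h_{jl} + \nabla_j h_{il} - \nabla_l h_{ij}\bigr),
\]
which I would establish at a point by working in normal coordinates (where all partial derivatives of $g$ vanish, so that $\partial_l(\partial_t g_{ij}) = 2\nabla_l h_{ij}$) and then invoking the tensorial character of $\partial_t \Gamma_{ij}^k$ (as the difference of two connections) to conclude globally.

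For the first identity, writing $|\nabla f|^2 = g^{ij}\,\partial_i f\,\partial_j f$ and differentiating in $t$ leaves only two contributions: the $\partial_t g^{ij}$ term, which produces $-2h^{ij}\,\partial_i f\,\partial_j f = -2h(\nabla f,\nabla f)$, and the $f$-derivative term, which yields $2 g^{ij}\,\partial_i f\,\partial_j(f_t) = 2\langle \nabla f,\nabla f_t\rangle$. This is essentially a one-line calculation.

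For the second identity, I would write $\Delta f = g^{ij}\nabla_i \nabla_j f$ and split $\partial_t(\Delta f)$ into three pieces. The $\partial_t g^{ij}$ contribution gives $-2h^{ij}\nabla_i\nabla_j f = -2\langle h,\Hess f\rangle$. The $\Gamma$-free part of $\partial_t(\nabla_i\nabla_j f)$ contracts with $g^{ij}$ to yield $\Delta f_t$. The remaining piece $-g^{ij}(\partial_t \Gamma_{ij}^k)\,\partial_k f$ is handled by inserting the formula for $\partial_t \Gamma_{ij}^k$ above: the symmetry in $i,j$ collapses $\nabla_i h_{jl} + \nabla_j h_{il}$ into $2\nabla_i h_{jl}$, which contracts to $2(\Div h)^k$, while the remaining term $-\nabla_l h_{ij}$ contracts to $-\nabla^k(\tr_g h)$. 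Combining produces precisely $-2\langle \Div h - \tfrac{1}{2}\nabla(\tr_g h),\nabla f\rangle$.

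The main obstacle is not any deep estimate but the careful bookkeeping of the connection variation and its contraction with $g^{ij}$: the calculation is entirely algebraic once $\partial_t \Gamma_{ij}^k$ is correctly in place, but one must track which pair of indices is being symmetrized and verify that the divergence and trace terms emerge with the right coefficients, in particular the factor $\tfrac{1}{2}$ in front of $\nabla(\tr_g h)$.
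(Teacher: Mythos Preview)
Your proposal is correct and is exactly the standard local-coordinate derivation of these variation formulas. Note, however, that the paper does not supply its own proof of this lemma: it is simply quoted from \cite{sun2011Gradient} (as Lemma~3 there) and used as a black box, so there is no argument in the paper to compare against. Your computation would serve as a self-contained proof where the paper has none.
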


\begin{lem}\label{leb}
  Let $(M,g(t))$ satisfies the hypotheses of Theorem~\ref{tha}. Then for any $\delta \in(0,\frac{1}{\alpha })$, we have
  \begin{equation}\label{eqd}
    \begin{split}
      (\Delta -\partial_t)F
      \ge &\frac{2(1-\delta \alpha)t}{n}(|\nabla f|^2-f_t-q-\hat A)^2-\frac{F}{t}-2\langle \nabla f, \nabla F\rangle \\
      &+\alpha t\hat A_f(|\nabla f|^2-f_t-q-\hat A)-2(\alpha -1)t\langle \nabla f,\nabla q\rangle \\
      &-t(2(\alpha -1)\hat A_f+\alpha \hat A_{ff}+2K_1+2(\alpha -1)K_3)|\nabla f|^2\\
      &-3\alpha t\sqrt{n}K_4|\nabla f|-\frac{\alpha tn}{2\delta }\max \{K_2^2,K_3^2\}-\alpha t\Delta q.
    \end{split}
  \end{equation}
\end{lem}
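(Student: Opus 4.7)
The plan is to set $w = F/t = |\nabla f|^2 - \alpha f_t - \alpha q - \alpha \hat A(f)$, so that $(\Delta-\partial_t)F = t(\Delta w - \partial_t w) - w$ and the term $-F/t$ on the right-hand side appears automatically from differentiating $\partial_t(tw)$. The task therefore reduces to a pointwise lower bound on $t(\Delta w - \partial_t w)$.

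To compute $\Delta w$ I would apply the Bochner identity $\Delta |\nabla f|^2 = 2|\Hess f|^2 + 2\langle \nabla f,\nabla \Delta f\rangle + 2\Ric(\nabla f,\nabla f)$ and expand $\Delta \hat A(f) = \hat A_f\Delta f + \hat A_{ff}|\nabla f|^2$. For $\partial_t w$ I would use Lemma~\ref{lea} once for $\partial_t|\nabla f|^2$, and once more, combined with the $\partial_t$-derivative of \eqref{eqc}, to rewrite $\alpha \Delta f_t$ as $\alpha(\Delta f)_t$ plus the flow commutator $2\alpha\langle h,\Hess f\rangle + 2\alpha\langle \Div h - \tfrac{1}{2}\nabla\tr_g h,\nabla f\rangle$. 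In the subtraction $\Delta w - \partial_t w$ the second-order-in-time contributions $f_{tt}$, $q_t$, $\hat A_f f_t$ cancel, leaving only first-order contributions involving $h$, $\Ric$, and $\nabla h$. Substituting $\nabla \Delta f = \nabla f_t + \nabla q + \hat A_f\nabla f - \nabla|\nabla f|^2$ from \eqref{eqc} into $2\langle \nabla f,\nabla \Delta f\rangle$ produces four cross terms, which are arranged to match the identity $-2\langle \nabla f,\nabla w\rangle = -2\langle \nabla f,\nabla|\nabla f|^2\rangle + 2\alpha\langle \nabla f,\nabla f_t\rangle + 2\alpha\langle \nabla f,\nabla q\rangle + 2\alpha\hat A_f|\nabla f|^2$, so that only the residues $-2(\alpha-1)\langle \nabla f,\nabla q\rangle$ and $-2(\alpha-1)\hat A_f|\nabla f|^2$ remain.

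The heart of the estimate is the pairing of $2|\Hess f|^2$ with the flow cross term $-2\alpha\langle h,\Hess f\rangle$. By Young's inequality with weight $2\delta$,
\[
-2\alpha\langle h,\Hess f\rangle \ge -2\delta\alpha|\Hess f|^2 - \frac{\alpha}{2\delta}|h|^2,
\]
so the residual $2(1-\delta\alpha)|\Hess f|^2$ remains nonnegative precisely because $\delta<1/\alpha$. Combining $|\Hess f|^2 \ge (\Delta f)^2/n$ with the identity $-\Delta f = |\nabla f|^2 - f_t - q - \hat A$ from \eqref{eqc} delivers the leading quadratic $\frac{2(1-\delta\alpha)t}{n}(|\nabla f|^2-f_t-q-\hat A)^2$. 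The pointwise bound $|h|^2 \le n\max\{K_2^2,K_3^2\}$, which follows from $-K_2 g \le h \le K_3 g$, then produces the $-\frac{\alpha tn}{2\delta}\max\{K_2^2,K_3^2\}$ contribution.

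The remaining bounds are routine: $2\Ric(\nabla f,\nabla f) \ge -2K_1|\nabla f|^2$; the term $-2(\alpha-1)h(\nabla f,\nabla f)$ that survives the cancellation with $\partial_t|\nabla f|^2$ is estimated by $-2(\alpha-1)K_3|\nabla f|^2$ using $h\le K_3 g$; and the divergence piece is controlled via $|\Div h - \tfrac{1}{2}\nabla\tr_g h| \le \tfrac{3\sqrt n}{2}|\nabla h| \le \tfrac{3\sqrt n}{2}K_4$, yielding $-3\alpha t\sqrt n K_4|\nabla f|$. Finally, $-\alpha\hat A_f\Delta f$ is rewritten through \eqref{eqc} as the mixed term $\alpha\hat A_f(|\nabla f|^2-f_t-q-\hat A)$. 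The main obstacle is the careful bookkeeping that arranges all of the reassembled cross terms so that $-2\langle \nabla f,\nabla F\rangle$ drops out exactly and the coefficient of $|\nabla f|^2$ comes out to precisely $-(2(\alpha-1)\hat A_f + \alpha\hat A_{ff} + 2K_1 + 2(\alpha-1)K_3)$; once this is verified, every remaining step is a direct application of Young's inequality, the Bochner identity, or the structural hypotheses on $h$ and $q$.
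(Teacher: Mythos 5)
Your proposal is correct and follows essentially the same route as the paper: Bochner's formula plus the commutator identities of Lemma~\ref{lea}, substitution of the equation \eqref{eqc} so the second-order time terms cancel and $-2\langle\nabla f,\nabla F\rangle$ is extracted, then Young's inequality with parameter $\delta$ to pair $|\Hess f|^2$ against $\langle h,\Hess f\rangle$, $|\Hess f|^2\ge(\Delta f)^2/n$, and the stated bounds on $\Ric$, $h$, $\nabla h$. The only difference is cosmetic bookkeeping (working with $w=F/t$ rather than $F$ itself), and all your coefficients match \eqref{eqd}.
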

\begin{proof}
  By the Bochner formula, \eqref{eqc} and Lemma~\ref{lea}, we calculate 
  \begin{equation*}
    \begin{split}
      \Delta F=&2t|\Hess f|^2+2t\Ric (\nabla f,\nabla f)+2t\langle \nabla f,\nabla \Delta f\rangle \\
      &-\alpha t\Delta (f_t)-\alpha t\Delta q-\alpha t\hat A_f\Delta f-\alpha t\hat A_{ff}|\nabla f|^2\\
      =&2t|\Hess f|^2+2t\Ric (\nabla f,\nabla f)+2t\langle \nabla f,\nabla \Delta f\rangle \\
      &-\alpha t(\Delta f)_t-2\alpha t\langle h,\Hess f\rangle -2\alpha t\langle \Div h-\tfrac{1}{2}\nabla ({\tr}_gh),\nabla f\rangle \\
      &-\alpha t\Delta q-\alpha ta\hat A_f\Delta f-\alpha ta\hat A_{ff}|\nabla f|^2.
    \end{split}
  \end{equation*}
  By \eqref{eqc} and the definition of $F$ we have 
  \[
  \nabla \Delta f=-\frac{\nabla F}{t}-(\alpha -1)(\nabla (f_t)+\nabla q+\hat A_f\nabla f)
  \]
  and
  \[
  (\Delta f)_t=\frac{F}{t^2}-\frac{F_t}{t}-(\alpha -1)(f_{tt}+q_t+\hat A_ff_t).
  \]
  By Lemma~\ref{lea} we also have 
  \begin{equation*}
    \begin{split}
      F_t=&|\nabla f|^2-\alpha f_t-\alpha q-\alpha \hat A\\
      &+2t\langle \nabla f,(\nabla f)_t\rangle -\alpha t(f_{tt}+q_t+\hat A_ff_t)\\
      =&|\nabla f|^2-\alpha f_t-\alpha q-\alpha \hat A\\
      &+2t\langle \nabla f,\nabla (f_t)\rangle -2th(\nabla f,\nabla f)-\alpha t(f_{tt}+q_t+\hat A_ff_t).
    \end{split}
  \end{equation*}
  It follows the above equalities that
  \begin{equation}\label{eqe}
    \begin{split}
      (\Delta -\partial_t)F=&2t|\Hess f|^2+2t\Ric (\nabla f,\nabla f)-\frac{F}{t}-2\langle \nabla f,\nabla F\rangle \\
      &-2\alpha t\langle h,\Hess f\rangle -2\alpha t\langle \Div h-\tfrac{1}{2}\nabla ({\tr}_gh),\nabla f\rangle \\
      &+\alpha t\hat A_f(|\nabla f|^2-f_t-q-\hat A)-\alpha t\Delta q-2(\alpha -1)t\langle \nabla f,\nabla q\rangle \\
      &-2(\alpha -1)th(\nabla f,\nabla f)-2(\alpha -1)t\hat A_f|\nabla f|^2-\alpha t\hat A_{ff}|\nabla f|^2
    \end{split}
  \end{equation}
  The assumption $-K_2g\le h\le K_3g$ implies
  \[
  |h|^2\le n\max \{K_2^2,K_3^2\}.
  \]
  By Young's inequality, 
  \begin{equation}\label{eqf}
    \begin{split}
      \langle h,\Hess f\rangle \le &\delta |\Hess f|^2+\frac{1}{4\delta }|h|^2\\
      \le &\delta |\Hess f|^2+\frac{n}{4\delta }\max \{K_2^2,K_3^2\}
    \end{split}
  \end{equation}
  for any $\delta \in (0,\frac{1}{\alpha})$. We also have
  \begin{equation}\label{eqg}
    \begin{split}
      |\Div h-\tfrac{1}{2}\nabla ({\tr}_gh)|=&|g^{ij}\nabla_ih_{jl}-\tfrac{1}{2}g^{ij}\nabla_lh_{ij}|\\
      \le &\frac{3}{2}|g||\nabla h|\le \frac{3}{2}\sqrt{n}K_4.
    \end{split}
  \end{equation}
  On the other hand,
  \begin{equation}\label{eqh}
    \begin{split}
      |\Hess f|^2\ge \frac{1}{n}(\Delta f)^2=\frac{1}{n}(|\nabla f|^2-f_t-q-\hat A)^2.
    \end{split}
  \end{equation}
  Substituting \eqref{eqf}, \eqref{eqg} and \eqref{eqh} into \eqref{eqe} and using the assumptions on bounds of $\Ric $ and $h$, we obtain the final inequality \eqref{eqd}.
\end{proof}

\begin{proof}[The proof of Theorem~\ref{tha}]
  By the assumption of bounds of Ricci tensor and the evolution of the metric, we know that $g(t)$ is uniformly equivalent to the initial metric $g(0)$ (see \cite[Corollary~6.11]{chow2006Hamilton}), that is,
  \[
  e^{-2K_2T}g(0)\le g(t)\le e^{2K_3T}g(0).
  \]
  Then we know that $(M,g(t))$ is also complete for $t\in [0,T]$.

  Let $\phi \in C^\infty ((0,+\infty ])$, 
  \begin{equation*}
    \phi (s)=
    \begin{cases}
      1, & s\in [0,1],\\
      0, & s\in [2,+\infty )
    \end{cases}
  \end{equation*}
  satisfies $\phi(s)\in [0,1], \phi'(s)\le 0, \phi''(s)\ge -C_1$ and $\frac{|\phi'(s)|^2}{\phi(s)}\le C_1$, where $C_1$ is an absolute constant. Define
  \[
  \eta(x,t)=\phi\left(\frac{r(x,t)}{R}\right),
  \]
  where $r(x,t)=d(x,x_0,t)$. Using the argument of \cite{calabi1958An}, we can assume that the function $\eta(x,t)$ is $C^2$ with support in $Q_{2R,T}$.

  Define $G=\eta F$. For any $T_1\in (0,T]$, let $(x_1,t_1)\in Q_{2R,T_1}$ at which $G$ attains its maximum, and without loss of generality, we can assume $G(x_1,t_1)>0$, and then $\eta(x_1,t_1)>0$ and $F(x_1,t_1)>0$. Hence, at $(x_1,t_1)$, we have 
  \[
  \nabla G=0,\quad \Delta G\le 0,\quad \partial_tG\ge 0.
  \]
  Hence, we obtain 
  \begin{equation}\label{eqi}
    \begin{split}
      \nabla F=-\frac{F}{\eta }\nabla \eta 
    \end{split}
  \end{equation}
  and 
  \begin{equation}\label{eqj}
    \begin{split}
      0\ge &(\Delta -\partial_t)G\\
      =&F(\Delta -\partial_t)\eta +\eta (\Delta -\partial_t)F+2\langle \nabla \eta ,\nabla F\rangle .
    \end{split}
  \end{equation}
  By the properties of $\phi $ and the Laplacian comparison theorem, we have 
  \[
  \frac{|\nabla \eta |^2}{\eta }\le \frac{C_1}{R^2}
  \]
  and
  \begin{equation*}
    \begin{split}
      \Delta \eta =&\phi'\frac{\Delta r}{R}+\phi''\frac{|\nabla r|^2}{R^2}\\
      \ge &-\frac{\sqrt{C_1}}{R}\sqrt{(n-1)K_1}\coth \left(\sqrt{\tfrac{K_1}{n-1}}R\right)-\frac{C_1}{R^2}\\
      \ge &-\frac{(n-1)\sqrt{C_1}}{R^2}-\frac{\sqrt{(n-1)C_1K_1}}{R}-\frac{C_1}{R^2}.
    \end{split}
  \end{equation*}
  By \cite[p. 494]{sun2011Gradient}, there exist a constant $C_2$ such that
  \[
  -F\eta_t\ge -C_2K_2F.
  \]
  Substituting the above three inequalities into \eqref{eqj} and using \eqref{eqi}, we obtain 
  \begin{equation}\label{eqk}
    \begin{split}
      0\ge &-\left(\frac{(n-1)\sqrt{C_1}}{R^2}+\frac{\sqrt{(n-1)C_1K_1}}{R}+\frac{3C_1}{R^2}+C_2K_2\right)F+\eta (\Delta -\partial_t)F\\
      =:&\eta (\Delta -\partial_t)F-\left(\frac{C_3(n)}{R^2}+\frac{C_4(n)}{R}\sqrt{K_1}+C_2K_2\right)F.
    \end{split}
  \end{equation}
  
  Let $\mu =\frac{|\nabla f(x_1,t_1)|^2}{F(x_1,t_1)}$, then, at $(x_1,t_1)$ we have 
  \[
  \eta \langle \nabla f,\nabla F\rangle =-F\langle \nabla f,\nabla \eta \rangle \le \frac{\sqrt{C_1}}{R}\eta^{\frac{1}{2}}F|\nabla f| 
  \]
  and 
  \[
  |\nabla f|^2-f_t-q-a\hat A=\left(\mu -\frac{t_1\mu -1}{t_1\alpha }\right)F.
  \]
  Therefore, at $(x_1,t_1)$, by Lemma~\ref{leb} and \eqref{eqk}, and using the inequality
  \begin{equation}\label{eqo}
    \begin{split}
      3\alpha \sqrt{n}K_4|\nabla f|\le 2K_4|\nabla f|^2+\frac{9}{8}n\alpha^2K_4,
    \end{split}
  \end{equation}
  we obtain
  \begin{equation*}
    \begin{split}
      0\ge &\frac{2(1-\delta \alpha )t_1}{n}\eta \left(\mu -\frac{t_1\mu -1}{t_1\alpha }\right)^2F^2-\frac{\eta F}{t_1}-\frac{2\sqrt{C_1}}{R}\eta^{\frac{1}{2}}\mu^{\frac{1}{2}}F^{\frac{3}{2}}\\
      &+\alpha t_1\eta \hat A_f\left(\mu -\frac{t_1\mu -1}{t_1\alpha }\right)F-\alpha t_1\eta \Delta q-2(\alpha -1)t_1\eta \langle \nabla f,\nabla q\rangle \\
      &-t_1\eta (2(\alpha -1)\hat A_f+\alpha \hat A_{ff}+2(K_1+(\alpha -1)K_3+K_4))\mu F\\
      &-\frac{9}{8}t_1\eta n\alpha^2K_4-\frac{t_1\eta n\alpha }{2\delta }\max \{K_2^2,K_3^2\}\\
      &-\left(\tfrac{C_3(n)}{R^2}+\tfrac{C_4(n)}{R}\sqrt{K_1}+C_2K_2\right)F.
    \end{split}
  \end{equation*}
  Multiplying through by $t_1\eta $, we conclude that
  \begin{equation*}
    \begin{split}
      0\ge &\frac{2(1-\delta \alpha )}{n\alpha^2}(1+(\alpha -1)t_1\mu )^2G^2-\eta G-\frac{2\sqrt{C_1}}{R}t_1\mu^{\frac{1}{2}}G^{\frac{3}{2}}\\
      &+t_1\eta \hat A_f(1+(\alpha -1)t_1\mu )G-\alpha t_1^2\eta^2\Delta q-2(\alpha -1)t_1^2\eta^2\langle \nabla f,\nabla q\rangle \\
      &-t_1^2\eta[2(\alpha -1)\hat A_f+\alpha \hat A_{ff}+2(K_1+(\alpha -1)K_3+K_4)]\mu G\\
      &-\frac{9}{8}t_1^2\eta^2n\alpha^2K_4-\frac{t_1^2\eta^2n\alpha }{2\delta }\max \{K_2^2,K_3^2\}\\
      &-t_1\left(\tfrac{C_3(n)}{R^2}+\tfrac{C_4(n)}{R}\sqrt{K_1}+C_2K_2\right)G\\
      =&\frac{2(1-\delta \alpha )}{n\alpha^2}(1+(\alpha -1)t_1\mu )^2G^2-\eta G-\frac{2\sqrt{C_1}}{R}t_1\mu^{\frac{1}{2}}G^{\frac{3}{2}}\\
      &+t_1\eta \hat A_fG-\alpha t_1^2\eta^2\Delta q-2(\alpha -1)t_1^2\eta^2\langle \nabla f,\nabla q\rangle \\
      &-t_1^2\eta[(\alpha -1)\hat A_f+\alpha \hat A_{ff}+2(K_1+(\alpha -1)K_3+K_4)]\mu G\\
      &-\frac{9}{8}t_1^2\eta^2n\alpha^2K_4-\frac{t_1^2\eta^2n\alpha }{2\delta }\max \{K_2^2,K_3^2\}\\
      &-t_1\left(\tfrac{C_3(n)}{R^2}+\tfrac{C_4(n)}{R}\sqrt{K_1}+C_2K_2\right)G
    \end{split}
  \end{equation*}
  Noticing that $0<\eta(x_1,t_1) \le 1$, from the above inequalities we obtain
  \begin{equation*}
    \begin{split}
      0\ge &\frac{2(1-\delta \alpha )}{n\alpha^2}G^2+\frac{4(1-\delta \alpha )}{n\alpha^2}(\alpha -1)t_1\mu G^2\\
      &+\frac{2(1-\delta \alpha )}{n\alpha^2}(\alpha -1)^2t_1^2\mu^2G^2-G-t_1\lambda_{2R}G-\frac{2\sqrt{C_1}}{R}t_1\mu^{\frac{1}{2}}G^{\frac{3}{2}}\\
      &-t_1^2\Big[(\alpha -1)\Lambda_{2R}+\alpha \Sigma_{2R}+2(K_1+(\alpha -1)K_3+K_4)\Big]\mu G\\
      &-2t_1^2(\alpha -1)\gamma_{2R}\mu^{\frac{1}{2}}G^{\frac{1}{2}}-\frac{9}{8}t_1^2n\alpha^2K_4-\frac{t_1^2n\alpha }{2\delta }\max \{K_2^2,K_3^2\}\\
      &-\alpha t_1^2\theta_{2R}-t_1\left(\tfrac{C_3(n)}{R^2}+\tfrac{C_4(n)}{R}\sqrt{K_1}+C_2K_2\right)G.
    \end{split}
  \end{equation*}
  By Young's inequality, we have 
  \[
  \frac{2\sqrt{C_1}}{R}\mu^{\frac{1}{2}}G^{\frac{3}{2}}\le \frac{4(1-\delta \alpha )}{n\alpha^2}(\alpha -1)\mu G^2+\frac{n\alpha^2C_1G}{8(1-\delta \alpha )(\alpha -1)R^2},
  \]
  \begin{equation*}
    \begin{split}
      2(\alpha -1)\gamma_{2R}\mu^{\frac{1}{2}}G^{\frac{1}{2}}\le \frac{2(1-\delta \alpha )\varepsilon }{n\alpha^2}(\alpha -1)^2\mu^2G^2+\frac{3}{4}\left(\frac{2n\alpha^2}{(1-\delta \alpha )\varepsilon }\right)^{\frac{1}{3}}(\alpha -1)^{\frac{2}{3}}\gamma_{2R}^{\frac{4}{3}}
    \end{split}
  \end{equation*}
  and 
  \begin{equation*}
    \begin{split}
      &\Big[(\alpha -1)\Lambda_{2R}+\alpha \Sigma_{2R}+2(K_1+(\alpha -1)K_3+K_4)\Big]\mu G\\
      \le &\frac{2(1-\delta \alpha )(1-\varepsilon )}{n\alpha^2}(\alpha -1)^2\mu^2G^2\\
      &+\frac{n\alpha^2}{8(1-\delta \alpha )(1-\varepsilon )(\alpha -1)^2}\Big[(\alpha -1)\Lambda_{2R}+\alpha \Sigma_{2R}+2(K_1+(\alpha -1)K_3+K_4)\Big]^2,
    \end{split}
  \end{equation*}
  where $\varepsilon \in (0,1)$ is an arbitrary constant. Combining the above four inequalities, there exists a constant $C_5(n)$ that depends only on $n$, such that
  \begin{equation*}
    \begin{split}
      0\ge &\frac{2(1-\delta \alpha )}{n\alpha^2}G^2\\
      &-\left[1+t_1\left(\lambda_{2R}+\frac{C_5(n)}{R^2}\left(\frac{\alpha^2}{(1-\delta \alpha )(\alpha -1)}+\sqrt{K_1}R\right)+C_2K_2\right)\right]G\\
      &-t_1^2\alpha \theta_{2R}-t_1^2n\alpha^2\max \{K_2^2,K_3^2\}\\
      &-t_1^2\frac{n\alpha^2}{8(1-\delta \alpha )(1-\varepsilon )(\alpha -1)^2}\\
      &\cdot \Big[(\alpha -1)\Lambda_{2R}+\alpha \Sigma_{2R}+2(K_1+(\alpha -1)K_3+K_4)\Big]^2\\
      &-\frac{9}{8}t_1^2n\alpha^2K_4-\frac{3}{4}t_1^2\left(\frac{2n\alpha^2}{(1-\delta \alpha )\varepsilon }\right)^{\frac{1}{3}}(\alpha -1)^{\frac{2}{3}}\gamma_{2R}^{\frac{4}{3}}.
    \end{split}
  \end{equation*}
  For a positive number $a$ and two nonnegative numbers $b,c$, from the inequality $ax^2-bx-c\le 0$ we have $x\le \frac{b}{a}+\sqrt{\frac{c}{a}}$. Hence, we obtain
  \begin{equation*}
    \begin{split}
      G\le &\frac{n\alpha^2}{2(1-\delta \alpha )}\left[1+t_1\left(\lambda_{2R}+\frac{C_5(n)}{R^2}\left(\frac{\alpha^2}{(1-\delta \alpha )(\alpha -1)}+\sqrt{K_1}R\right)+C_2K_2\right)\right]\\
      &+t_1\Bigg\{n\alpha^2\bigg[\alpha \theta_{2R}+n\alpha^2\max \{K_2^2,K_3^2\}\\
      &\ \qquad +\frac{n\alpha^2}{8(1-\delta \alpha )(1-\varepsilon )(\alpha -1)^2}\\
      &\quad \quad \ \cdot \Big[(\alpha -1)\Lambda_{2R}+\alpha \Sigma_{2R}+2(K_1+(\alpha -1)K_3+K_4)\Big]^2\\
      &\ \qquad +\frac{9}{8}n\alpha^2K_4+\frac{3}{4}\left(\frac{2n\alpha^2}{(1-\delta \alpha )\varepsilon }\right)^{\frac{1}{3}}(\alpha -1)^{\frac{2}{3}}\gamma_{2R}^{\frac{4}{3}}\bigg]\Bigg\}^{\frac{1}{2}}. 
    \end{split}
  \end{equation*}
  Now, by taking $\delta =\frac{1}{2\alpha }$, and noticing that $d(x,x_0,T_1)\le R$ implies $\eta(x,T_1)=1$, we can get
  \begin{equation*}
    \begin{split}
      &(|\nabla f|^2-\alpha f_1-\alpha q_t-\alpha \hat A)(x,T_1)=\frac{F(x,T_1)}{T_1}\le \frac{G(x_1,t_1)}{T_1}\\
      \le &\frac{n\alpha^2}{T_1}+\frac{C(n)\alpha^2}{R^2}\left(\frac{\alpha^2}{\alpha -1}+\sqrt{K_1}R\right)+C(n)\alpha^2K_2+n\alpha^2\lambda_{2R}\\
      &+\Bigg\{n\alpha^2\bigg[\alpha \theta_{2R}+n\alpha^2\max \{K_2^2,K_3^2\}\\
      &\ \qquad +\frac{n\alpha^2}{4(1-\varepsilon )(\alpha -1)^2}\Big((\alpha -1)\Lambda_{2R}+\alpha \Sigma_{2R}+2(K_1+(\alpha -1)K_3+K_4)\Big)^2\\
      &\ \qquad +\frac{9}{8}n\alpha^2K_4+\frac{3}{4}\left(\frac{4n\alpha^2}{\varepsilon }\right)^{\frac{1}{3}}(\alpha -1)^{\frac{2}{3}}\gamma_{2R}^{\frac{4}{3}}\bigg]\Bigg\}^{\frac{1}{2}},
    \end{split}
  \end{equation*}
  where $C(n)$ is an appropriate constant that depends only $n$. Since $T_1$ is arbitrary, we complete the proof.
\end{proof}

\begin{rem}
  In the above proof, if we use $x\le \frac{1}{2a}\left(b+\sqrt{b^2+4ac}\right)$ instead of $x\le \frac{b}{a}+\sqrt{\frac{c}{a}}$ when we deal with $ax^2-bx-c\le 0$, then a more appropriate $\delta $ may give a sharper estimate.
\end{rem}

From the above local estimate, we get a global one:
\begin{cor}\label{coa}
  Let $(M,g(0))$ be a complete noncompact Riemannian manifold without boundary, and let $g(t)$ evolves by \eqref{eqb} for $t\in [0,T]$. Suppose that there exist constant $K_1, K_2, K_3, K_4, \gamma , \theta \ge 0$ such that 
  \[
  \Ric \ge -K_1g,\quad -K_2g\le h\le K_3g,\quad |\nabla h|\le K_4
  \]
  and 
  \[
  |\nabla q|\le \gamma ,\quad \Delta q\le \theta .
  \]
  If $u$ is a positive solution to \eqref{eqa}, then for any $\alpha >1$, we have 
  \begin{equation}\label{eqm}
    \begin{split}
      &\frac{|\nabla u(x,t)|^2}{u^2(x,t)}-\alpha \frac{u_t(x,t)}{u(x,t)}-\alpha q(x,t)-\alpha \frac{A(u(x,t))}{u(x,t)}\\
      \le &\frac{n\alpha^2}{t}+C'\left(K_1+K_2+K_3+K_4+\sqrt{K_4}+\sqrt{\theta }+\gamma^{\frac{2}{3}}+\lambda +\Lambda +\Sigma \right)
    \end{split}
  \end{equation}
  on $M\times [0,T]$, where $C'$ is a constant that depends only on $n,\alpha $.
\end{cor}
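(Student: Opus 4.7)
The plan is to obtain Corollary~\ref{coa} by simply sending $R\to\infty$ in the local estimate of Theorem~\ref{tha}. The only things to check are that (a) the local hypotheses hold on every $Q_{2R,T}$ with constants independent of $R$, (b) the $R$-dependent error terms on the right-hand side of \eqref{eql} tend to zero, and (c) the remaining expression can be reorganised into the form stated in \eqref{eqm}.

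First I would observe that since $M$ is complete noncompact and the curvature/tensor bounds $\Ric\ge -K_1 g$, $-K_2 g\le h\le K_3 g$, $|\nabla h|\le K_4$ hold globally, they hold in particular on every $Q_{2R,T}$. Likewise, the global bounds $|\nabla q|\le\gamma$ and $\Delta q\le\theta$ imply $\gamma_{2R}\le\gamma$ and $\theta_{2R}\le\theta$ for every $R>0$, and from the definitions one has $\lambda_{2R}\le\lambda$, $\Lambda_{2R}\le\Lambda$, $\Sigma_{2R}\le\Sigma$. Thus Theorem~\ref{tha} applies on $Q_{R,T}$ with all the local constants replaced by the (possibly larger but still finite) global ones.

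Next, I would fix an arbitrary point $(x,t)\in M\times(0,T]$ and choose $R$ so large that $(x,t)\in Q_{R,T}$. Writing out \eqref{eql} with $\lambda_{2R},\Lambda_{2R},\Sigma_{2R},\gamma_{2R},\theta_{2R}$ replaced by their global upper bounds, the only remaining dependence on $R$ is through the term
\[
\frac{C\alpha^{2}}{R^{2}}\left(\frac{\alpha^{2}}{\alpha-1}+\sqrt{K_{1}}\,R\right),
\]
which tends to $0$ as $R\to\infty$. Taking this limit (with $\alpha$ and $\varepsilon\in(0,1)$ fixed, for instance $\varepsilon=\tfrac12$) gives a global inequality of the same shape as \eqref{eql} but with all the local quantities replaced by their global counterparts and with the $R$-dependent term removed.

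Finally, to put the inequality in the compact form \eqref{eqm}, I would apply the elementary bound $\sqrt{\sum a_i}\le\sum\sqrt{a_i}$ for nonnegative $a_i$'s to the large square-root expression. The contributions sort out as follows: $\sqrt{\alpha\theta}$ produces the $\sqrt{\theta}$-term; $\sqrt{n\alpha^{2}\max\{K_{2}^{2},K_{3}^{2}\}}$ produces $K_{2}+K_{3}$; the square of $(\alpha-1)\Lambda+\alpha\Sigma+2(K_{1}+(\alpha-1)K_{3}+K_{4})$ under the square root becomes a linear combination of $\Lambda$, $\Sigma$, $K_{1}$, $K_{3}$, $K_{4}$; the $K_{4}$-term gives $\sqrt{K_{4}}$; and $\gamma^{4/3}$ under a square root gives $\gamma^{2/3}$. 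Combining this with the linear $\lambda$-term and the linear $K_{2}$-term already sitting outside the square root, one arrives at a bound of the shape
\[
\frac{n\alpha^{2}}{t}+C'\bigl(K_{1}+K_{2}+K_{3}+K_{4}+\sqrt{K_{4}}+\sqrt{\theta}+\gamma^{2/3}+\lambda+\Lambda+\Sigma\bigr),
\]
with a constant $C'$ depending only on $n$ and $\alpha$ (after absorbing the fixed choice of $\varepsilon$).

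I do not expect any serious obstacle here: the work is in the local estimate Theorem~\ref{tha}, and Corollary~\ref{coa} is a direct limit plus cosmetic regrouping. The only minor point to be careful about is keeping track that the constants in Theorem~\ref{tha} do not implicitly depend on $R$ through quantities like $\gamma_{2R}$, which is why I first replace all local quantities by their global bounds before letting $R\to\infty$.
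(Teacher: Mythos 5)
Your proposal is correct and is essentially the paper's own argument: apply Theorem~\ref{tha} with the global bounds dominating the local quantities $\gamma_{2R},\theta_{2R},\lambda_{2R},\Lambda_{2R},\Sigma_{2R}$, fix $\varepsilon\in(0,1)$, let $R\to\infty$ to kill the $\frac{C\alpha^2}{R^2}\left(\frac{\alpha^2}{\alpha-1}+\sqrt{K_1}R\right)$ term, and regroup the square root via $\sqrt{x+y}\le\sqrt{x}+\sqrt{y}$. The only point the paper makes explicitly that you leave implicit is the uniform equivalence of $g(t)$ to $g(0)$, ensuring $(M,g(t))$ stays complete noncompact, but this is already embedded in the proof of Theorem~\ref{tha}.
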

\begin{proof}
  By the uniform equivalence of $g(t)$, we know that $(M,g(t))$ is complete noncompact for $t\in [0,T]$. Now we choose $\varepsilon =\varepsilon_0$ in \eqref{eql}, where $\varepsilon_0$ is an arbitrary fixed number in $(0,1)$. Let $R\to +\infty $ in \eqref{eql}, and using the inequality $\sqrt{x+y}\le \sqrt{x}+\sqrt{y}$ holds for any $x,y\ge 0$, we complete the proof.
\end{proof} 

We now consider the case that the manifold $M$ is closed. By Lemma~\ref{leb}, we have a global gradient estimate on a closed Riemannian manifold.

\begin{thm}\label{thb}
  Let $(M,g(t))$ be a closed Riemannian manifold, where $g(t)$ evolves by \eqref{eqb} for $t\in [0,T]$ and satisfies
  \[
  \Ric \ge -K_1g,\quad -K_2g\le h\le K_3g,\quad |\nabla h|\le K_4.
  \]
  If $u$ is a positive solution to \eqref{eqa}, and $q(x,t)$ satisfies
  \[
  |\nabla q|\le \gamma ,\quad \Delta q\le \theta .
  \]
  Then for any $\alpha >1$, we have 
  \begin{equation}\label{eqn}
    \begin{split}
      &\frac{|\nabla u(x,t)|^2}{u^2(x,t)}-\alpha \frac{u_t(x,t)}{u(x,t)}-\alpha q(x,t)-\alpha \frac{A(u(x,t))}{u(x,t)}\\
      \le &\frac{n\alpha^2}{2t}+\frac{n\alpha^2}{\alpha -1}(K_1+(\alpha -1)K_3+K_4)+n\alpha^2\left(\max \{K_2,K_3\}+\frac{3}{4}\sqrt{2K_4}\right)\\
      &+\alpha^{\frac{3}{2}}\sqrt{n\theta }+\left(\frac{n\alpha^2}{2}(\alpha -1)^{\frac{1}{2}}+\sqrt{n}\alpha (\alpha -1)^{\frac{1}{4}}\right)\gamma^{\frac{2}{3}}+\frac{n\alpha^2}{2}(\lambda +\Lambda )+\frac{n\alpha^3}{2(\alpha -1)}\Sigma 
    \end{split}
  \end{equation}
  on $M\times (0,T]$.
\end{thm}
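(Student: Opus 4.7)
The strategy closely follows the proof of Theorem~\ref{tha}, but because $M$ is closed the cutoff function is unnecessary, which removes the $R^{-2}$, $R^{-1}\sqrt{K_1}$ and extra $K_2$ contributions that appeared in the local estimate. As a result, the whole argument becomes a clean application of the maximum principle to $F = t(|\nabla f|^2 - \alpha f_t - \alpha q - \alpha \hat A)$ with $f = \log u$, followed by Young-type optimizations.

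For each $T_1 \in (0, T]$, by compactness of $M$ the function $F$ attains its maximum on $M \times [0, T_1]$ at some $(x_1, t_1)$. If $F(x_1, t_1) \le 0$ then \eqref{eqn} is trivial; otherwise $t_1 > 0$, and at $(x_1, t_1)$ we have $\nabla F = 0$, $\Delta F \le 0$, $\partial_t F \ge 0$, hence $(\Delta - \partial_t) F(x_1, t_1) \le 0$. Plugging this into the differential inequality \eqref{eqd} of Lemma~\ref{leb}, setting $\mu := |\nabla f|^2 / F \ge 0$, and using the identity
\[
|\nabla f|^2 - f_t - q - \hat A = \tfrac{F}{\alpha t_1}\bigl(1 + (\alpha-1) t_1 \mu\bigr),
\]
together with the bounds $\hat A_f \ge -\lambda$, $\hat A_f \le \Lambda$, $\hat A_{ff} \le \Sigma$, $|\nabla q| \le \gamma$, $\Delta q \le \theta$, and \eqref{eqo} to absorb the $3\alpha \sqrt{n} K_4 |\nabla f|$ contribution. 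After multiplying by $t_1$ and expanding $(1 + (\alpha-1)t_1\mu)^2 = 1 + 2(\alpha-1)t_1 \mu + (\alpha-1)^2 t_1^2 \mu^2$, one obtains an inequality whose RHS contains terms of the form $F$, $t_1^2 \mu F$, and $t_1^2 \gamma\, \mu^{1/2} F^{1/2}$.

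The next step is to apply two Young inequalities to absorb the $\mu$-dependent cross terms into the $(\alpha-1)^2 t_1^2 \mu^2 F^2$ summand on the LHS: a standard $xy \le \varepsilon\, x^2 + y^2/(4\varepsilon)$ for the cross-term coefficient $[(\alpha-1)\Lambda + \alpha\Sigma + 2(K_1 + (\alpha-1)K_3 + K_4)]$, and a weighted $(p,q)=(4, 4/3)$ Young inequality for the $2(\alpha-1) t_1^2 \gamma\, \mu^{1/2} F^{1/2}$ piece. After these absorptions, the inequality reduces to a pure quadratic $a F^2 \le b F + c$ in $F$ alone, with $a = 2(1-\delta\alpha)/(n\alpha^2)$, $b = 1 + t_1 \lambda$, and $c$ an explicit $t_1^2$-polynomial in the remaining quantities.

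Finally, as indicated in the remark following Theorem~\ref{tha}, apply the sharp bound $F \le (b + \sqrt{b^2 + 4ac})/(2a)$ rather than the weaker $F \le b/a + \sqrt{c/a}$, and choose $\delta$ (e.g., $\delta = 1/(2\alpha)$, so that $(1-\delta\alpha)\delta\alpha = 1/4$) to optimally balance the split of $\langle h, \Hess f\rangle$ in \eqref{eqf}; this is what produces the clean coefficient $n\alpha^2 \max\{K_2, K_3\}$. Distributing $\sqrt{\sum X_j} \le \sum \sqrt{X_j}$ separates the contributions of $\theta$, $K_4$, $\gamma$, $\max\{K_2, K_3\}$, $\Lambda$, $\Sigma$ into individual additive terms, and dividing by $T_1$ while letting $T_1$ range over $(0, T]$ yields \eqref{eqn}. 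The principal technical difficulty is calibrating the two Young parameters simultaneously with $\delta$ so that each stated coefficient in \eqref{eqn}---in particular the linear $K_1, K_3, K_4$ dependence $\tfrac{n\alpha^2}{\alpha-1}$ instead of the cruder square-root bound one gets in the local version---comes out precisely as claimed.
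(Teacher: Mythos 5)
Your overall strategy (drop the cutoff, apply the maximum principle to $F=t(|\nabla f|^2-\alpha f_t-\alpha q-\alpha\hat A)$ on the closed manifold, feed Lemma~\ref{leb} into it, reduce to a quadratic $aF^2\le bF+c$, and use the sharp root bound) is the same skeleton as the paper's argument, and working with $F$ directly on $M\times[0,T_1]$ instead of the paper's auxiliary $\bar F$ and contradiction is harmless. The genuine gap is in the choice of $\delta$. With your proposed fixed $\delta=\tfrac{1}{2\alpha}$ the quadratic coefficient is $a=\tfrac{t_1}{n\alpha^2}$ and $b=1+t_1\lambda$, so even the sharp bound $F\le\tfrac{1}{2a}\bigl(b+\sqrt{b^2+4ac}\bigr)$ is at least $\tfrac{b}{a}=\tfrac{n\alpha^2}{t_1}+n\alpha^2\lambda$: you recover the coefficient $n\alpha^2$ of $\tfrac1t$ (i.e.\ Sun's original estimate, which Theorem~\ref{thb} is explicitly meant to improve) and $n\alpha^2\lambda$ rather than the claimed $\tfrac{n\alpha^2}{2t}$ and $\tfrac{n\alpha^2}{2}\lambda$. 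Getting the factor $\tfrac12$ forces $\delta\alpha$ to be small, but then the term $\tfrac{n\alpha}{2\delta}\max\{K_2^2,K_3^2\}$ in $c$ blows up; the missing idea is the paper's $t_1$-dependent choice
\[
\delta=\frac{t_1\max\{K_2,K_3\}}{\alpha\bigl(1+t_1\lambda+2t_1\max\{K_2,K_3\}\bigr)},
\]
for which $(1+t_1\lambda)^2+\tfrac{4(1-\delta\alpha)t_1^2}{\delta\alpha}\max\{K_2^2,K_3^2\}$ becomes the perfect square $\bigl(1+t_1\lambda+2t_1\max\{K_2,K_3\}\bigr)^2$, so the leading block collapses exactly to $\tfrac{n\alpha^2}{2t_1}+\tfrac{n\alpha^2}{2}\lambda+n\alpha^2\max\{K_2,K_3\}$ (with the degenerate case $K_2=K_3=0$ handled by letting $\delta\to0$). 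Without this calibration your argument does not reach the stated constants.

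A secondary discrepancy: you propose to absorb the $2(\alpha-1)t_1^2\gamma\,\mu^{1/2}G^{1/2}$-type term by an $\varepsilon$-weighted $(4,4/3)$ Young inequality against the quartic term, as in the local Theorem~\ref{tha}. The paper instead bounds $2t_1(\alpha-1)\gamma|\nabla f|\le t_1(\alpha-1)^{3/2}\gamma^{2/3}|\nabla f|^2+t_1(\alpha-1)^{1/2}\gamma^{4/3}$, puts the first piece into the quantity $E$ before completing the square in $|\nabla f|^2$ (the cross term $\tfrac{4(1-\delta\alpha)(\alpha-1)t_1}{n\alpha^2}|\nabla f|^2\tfrac{F}{t_1}$ being simply discarded as nonnegative), and keeps the second piece in $c$; this is what produces the exact coefficient $\bigl(\tfrac{n\alpha^2}{2}(\alpha-1)^{1/2}+\sqrt{n}\alpha(\alpha-1)^{1/4}\bigr)\gamma^{2/3}$ in \eqref{eqn}. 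Your $\varepsilon$-split of the quartic term yields a different (and, uniformly in $\varepsilon$, weaker) constant, so it would not reproduce the theorem as stated.
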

\begin{proof}
  We use the same symbols $f, F$ as above. Set 
  \begin{equation*}
    \begin{split}
      \bar F(x,t)=&F(x,t)-\frac{n\alpha^2}{\alpha -1}(K_1+(\alpha -1)K_3+K_4)t\\
      &-n\alpha^2\left(\max \{K_2,K_3\}+\frac{3}{4}\sqrt{2K_4}\right)t-\alpha^{\frac{3}{2}}\sqrt{n\theta }t\\
      &-\left(\frac{n\alpha^2}{2}(\alpha -1)^{\frac{1}{2}}+\sqrt{n}\alpha (\alpha -1)^{\frac{1}{4}}\right)\gamma^{\frac{2}{3}}t-\frac{n\alpha^2}{2}(\lambda +\Lambda )t-\frac{n\alpha^3}{2(\alpha -1)}\Sigma t.
    \end{split}
  \end{equation*}
  
  If $\bar F(x,t)\le \frac{n\alpha^2}{2}$ for any $(x,t)\in M\times (0,T]$, the proof is complete.

  If \eqref{eqn} doesn't hold, then at the maximal point $(x_0,t_0)$ of $\bar F(x,t)$, we have 
  \[
  \bar F(x_0,t_0)>\frac{n\alpha^2}{2}.
  \]
  As $\bar F(x,0)=0$, we know that $t_0>0$ here. Then applying the maximum principle, we have
  \[
  \nabla \bar F(x_0,t_0)=0,\quad \Delta \bar F(x_0,t_0)=0,\quad \partial_t\bar F(x_0,t_0)=0.
  \]
  Therefore, we obtain 
  \[
  0\ge (\Delta -\partial_t)\bar F\ge (\Delta -\partial_t)F.
  \] 
  Using Lemma~\ref{leb}, inequality \eqref{eqo} and the fact that 
  \begin{equation*}
    \begin{split}
      |\nabla f|^2-f_t-q-\hat A=&\frac{1}{\alpha }(|\nabla f|^2-\alpha f_t-\alpha q-\alpha \hat A)+\frac{\alpha -1}{\alpha }|\nabla f|^2\\
      =&\frac{1}{\alpha }\frac{F}{t_0}+\frac{\alpha -1}{\alpha }|\nabla f|^2,
    \end{split}
  \end{equation*}
  we obtain
  \begin{equation*}
    \begin{split}
      0\ge &\frac{2(1-\delta \alpha )t_0}{n\alpha^2}\left(\frac{F}{t_0}\right)^2+\frac{4(1-\delta \alpha )(\alpha -1)t_0}{n\alpha^2}|\nabla f|^2\frac{F}{t_0}\\
      &+\frac{2(1-\delta \alpha )(\alpha -1)^2t_0}{n\alpha^2}|\nabla f|^4-\frac{F}{t_0}-t_0\lambda \frac{F}{t_0}\\
      &-2t_0(\alpha -1)\gamma |\nabla f|-\frac{9}{8}t_0n\alpha^2K_4-t_0\alpha \theta -\frac{t_0n\alpha }{2\delta }\max \{K_2^2,K_3^2\}\\
      &-t_0\left((\alpha -1)\Lambda +\alpha \Sigma +2(K_1+(\alpha -1)K_3+K_4)\right)|\nabla f|^2. 
    \end{split}
  \end{equation*}
  By 
  \begin{equation*}
    \begin{split}
       \frac{F}{t_0}=&\frac{\bar F}{t_0}+\frac{n\alpha^2}{\alpha -1}(K_1+(\alpha -1)K_3+K_4)+\frac{3}{2}n\alpha^2\sqrt{K_4}\\
       &+\left(\frac{n\alpha^2}{2}(\alpha -1)^{\frac{1}{2}}+\sqrt{2n}\alpha (\alpha -1)^{\frac{1}{4}}\right)\gamma^{\frac{2}{3}}\\
       &+n\alpha^2A_1+\frac{n\alpha^3}{2(\alpha -1)}A_2>0,
    \end{split}
  \end{equation*}
  \[
  2t_0(\alpha -1)\gamma |\nabla f|^2\le t_0(\alpha -1)^{\frac{3}{2}}\gamma^{\frac{2}{3}}|\nabla f|^2+t_0(\alpha -1)^{\frac{1}{2}}\gamma^{\frac{4}{3}},
  \]
  and using the inequality $ax^2-bx\ge -\frac{b^2}{4a}$ holds for $a>0, b\ge 0$, we obtain
  \begin{equation*}
    \begin{split}
      0\ge &\frac{2(1-\delta \alpha )t_0}{n\alpha^2}\left(\frac{F}{t_0}\right)^2-\frac{F}{t_0}-t_0\lambda \frac{F}{t_0}-\frac{9}{8}t_0n\alpha^2K_4-t_0\alpha \theta \\
      &-\frac{t_0n\alpha }{2\delta }\max \{K_2^2,K_3^2\}-t_0(\alpha -1)^{\frac{1}{2}}\gamma^{\frac{4}{3}}-\frac{t_0n\alpha^2}{8(1-\delta \alpha )(\alpha -1)^2}E^2,
    \end{split}
  \end{equation*}
  where 
  \[
  E=(\alpha -1)^{\frac{3}{2}}\gamma^{\frac{2}{3}}+(\alpha -1)\Lambda +\alpha \Sigma +2(K_1+(\alpha -1)K_3+K_4).
  \]
  For a positive number $a$ and two nonnegative numbers $b,c$, from the inequality $ax^2-bx-c\le 0$ we have 
  \[
  x\le \frac{1}{2a}\left(b+\sqrt{b^2+4ac}\right).
  \] 
  Hence, we obtain
  \begin{equation*}
    \begin{split}
      \frac{F}{t_0}\le &\frac{n\alpha^2}{4(1-\delta \alpha )t_0}\Bigg\{1+t_0\lambda +\bigg[(1+t_0\lambda )^2\\
      &+\frac{8(1-\delta \alpha )t_0}{n\alpha^2}\bigg(\frac{n\alpha^2t_0}{8(1-\delta \alpha )(\alpha -1)^2}E^2\\
      &+t_0(\alpha -1)^{\frac{1}{2}}\gamma^{\frac{4}{3}}+\frac{9}{8}t_0n\alpha^2K_4+t_0\alpha \theta +\frac{t_0n\alpha }{2\delta }\max \{K_2^2,K_3^2\}\bigg)\bigg]^{\frac{1}{2}}\Bigg\}.
    \end{split}
  \end{equation*}
  Using the inequality $\sqrt{x+y}\le \sqrt{x}+\sqrt{y}$ holds for any $x,y\ge 0$, we obtain
  \begin{equation*}
    \begin{split}
      \frac{F}{t_0}\le &\frac{n\alpha^2}{4(1-\delta \alpha )t_0}\Bigg\{1+t_0\lambda +\bigg[(1+t_0\lambda )^2+\frac{4(1-\delta \alpha )t_0^2}{\delta \alpha }\max \{K_2^2,K_3^2\}\bigg]^{\frac{1}{2}}\Bigg\}\\
      &+\frac{n\alpha^2}{4(1-\delta \alpha )t_0}\Bigg\{\frac{8(1-\delta \alpha )t_0^2}{n\alpha^2}\bigg[\frac{n\alpha^2}{8(1-\delta \alpha )(\alpha -1)^2}E^2\\
      &+(\alpha -1)^{\frac{1}{2}}\gamma^{\frac{4}{3}}+\frac{9}{8}n\alpha^2K_4+\alpha \theta \bigg]\Bigg\}^{\frac{1}{2}}.
    \end{split}
  \end{equation*}
  Now, by taking $\delta =\frac{t_0\max \{K_2,K_3\}}{1+t_0\lambda +2t_0\max \{K_2,K_3\}}\cdot \frac{1}{\alpha }\in (0,\frac{1}{\alpha })$ for $\max \{K_2,K_3\}\ne 0$. However, from Lemma~\ref{leb} we know that we can choose $\delta =0$ if $K_2=K_3=0$. Therefore, in any case, we can get 
  \begin{equation*}
    \begin{split}
      &\frac{n\alpha^2}{4(1-\delta \alpha )t_0}\Bigg\{1+t_0\lambda +\bigg[(1+t_0\lambda )^2+\frac{4(1-\delta \alpha )t_0^2}{\delta \alpha }\max \{K_2^2,K_3^2\}\bigg]^{\frac{1}{2}}\Bigg\}\\
      =&\frac{n\alpha^2}{2t_0}+\frac{n\alpha^2}{2}\lambda +n\alpha^2\max \{K_2,K_3\}
    \end{split}
  \end{equation*}
  and 
  \begin{equation*}
    \begin{split}
      &\frac{n\alpha^2}{4(1-\delta \alpha )t_0}\left(\frac{8(1-\delta \alpha )t_0^2}{n\alpha^2}\right)^{\frac{1}{2}}=\left(\frac{1+t_0\lambda +2t_0\max \{K_2,K_3\}}{2(1+t_0\lambda +t_0\max \{K_2,K_3\})}\right)^{\frac{1}{2}}\sqrt{n}\alpha \le \sqrt{n}\alpha .
    \end{split}
  \end{equation*}
  Therefore, again according to $\sqrt{x+y}\le \sqrt{x}+\sqrt{y}$, we obtain
  \begin{equation*}
    \begin{split}
      \frac{F}{t_0}\le &\frac{n\alpha^2}{2t_0}+\frac{n\alpha^2}{2}\lambda +n\alpha^2\max \{K_2,K_3\}\\
      &+\sqrt{n}\alpha \Bigg\{\frac{\sqrt{n}\alpha }{2\sqrt{2}(\alpha -1)}\sqrt{\frac{1+t_0\lambda +2t_0\max \{K_2,K_3\}}{1+t_0\lambda +t_0\max \{K_2,K_3\}}}E\\
      &+(\alpha -1)^{\frac{1}{4}}\gamma^{\frac{2}{3}}+\frac{3}{2\sqrt{2}}\sqrt{n}\alpha \sqrt{K_4}+\sqrt{\alpha \theta }\Bigg\}\\
      \le &\frac{n\alpha^2}{2t_0}+\frac{n\alpha^2}{2}\lambda +n\alpha^2\max \{K_2,K_3\}\\
      &+\frac{n\alpha^2}{2(\alpha -1)}E+\sqrt{n}\alpha (\alpha -1)^{\frac{1}{4}}\gamma^{\frac{2}{3}}+\frac{3}{4}n\alpha^2\sqrt{2K_4}+\alpha^{\frac{3}{2}}\sqrt{n\theta }.
    \end{split}
  \end{equation*}
  Substituting $E$ into the above inequality yields
  \begin{equation*}
    \begin{split}
      \frac{F}{t_0}\le &\frac{n\alpha^2}{2t_0}+\frac{n\alpha^2}{\alpha -1}(K_1+(\alpha -1)K_3+K_4)\\
      &+n\alpha^2\left(\max \{K_2,K_3\}+\frac{3}{4}\sqrt{2K_4}\right)+\alpha^{\frac{3}{2}}\sqrt{n\theta }\\
      &+\left(\frac{n\alpha^2}{2}(\alpha -1)^{\frac{1}{2}}+\sqrt{n}\alpha (\alpha -1)^{\frac{1}{4}}\right)\gamma^{\frac{2}{3}}+\frac{n\alpha^2}{2}(\lambda +\Lambda )+\frac{n\alpha^3}{2(\alpha -1)}\Sigma .
    \end{split}
  \end{equation*} 
  This implies that $\bar F(x_0,t_0)\le \frac{n\alpha^2}{2}$, in contradiction with our assumption. So \eqref{eqn} holds.
\end{proof}

\begin{rem}
  In \cite[Theorem~6]{sun2011Gradient}, The coefficient of $\frac{1}{t}$ in the right hand side of the gradient inequality is $n\alpha^2$. We see Theorem~\ref{thb} extends and improves Sun's estimate.
\end{rem}

\begin{rem}\label{rea}
  In Theorem~\ref{tha} if $K_1=K_4=\Sigma_{2R}=0$, we can let $\alpha \to 1$. Similarly, in Corollary~\ref{coa} and Theorem~\ref{thb}, if $K_1=K_4=\Sigma =0$, we can also let $\alpha \to 1$.
\end{rem}

Similar to \cite[Corollary~8]{sun2011Gradient}, integrating the gradient estimate in space-time as in \cite{li1986parabolic} or \cite{guenther2002The}, we can derive the following parabolic Harnack type inequality.
\begin{cor}\label{coc}
  Let $(M,g(0))$ be a complete noncompact Riemannian manifold without boundary or a closed Riemannian manifold. Assume that $g(t)$ evolves by \eqref{eqb} for $t\in [0,T]$ and satisfies 
  \[
  \Ric \ge -K_1g,\quad -K_2g\le h\le K_3g,\quad |\nabla h|\le K_4.
  \]
  If $u$ is a positive solution to \eqref{eqa}, and $q(x,t)$ satisfies 
  \[
  |\nabla q|\le \gamma ,\quad \Delta q\le \theta .
  \]
  Then for any $(x_1,t_1), (x_2,t_2)$ in $M\times (0,T]$ such that $t_1<t_2$, we have 
  \begin{equation}\label{eqt}
    \begin{split}
      u(x_1,t_1)\le u(x_2,t_2)\left(\frac{t_2}{t_1}\right)^{\tau n\alpha }\exp \left(\frac{\alpha Z }{4(t_2-t_1)}+C\frac{t_2-t_1}{\alpha }K\right),
    \end{split}
  \end{equation} 
  for any $\alpha >1$, where 
  \begin{equation*}
    \tau =
    \begin{cases}
      1, & \mbox{if}\ (M,g(0))\ \mbox{is complete noncompact without boundary},\\
      \frac{1}{2}, & \mbox{if}\ (M,g(0))\ \mbox{is closed,}
    \end{cases}
  \end{equation*}
  \[K=K_1+K_2+K_3+K_4+\sqrt{K_4}+\gamma +\sqrt{\theta }+\gamma^{\frac{2}{3}}+\lambda +\Lambda +\Sigma ,
  \]
  $C$ is a constant that depends only on $n,\alpha $, and 
  \[
  Z=\inf_{\zeta }\int_0^1|\zeta'(s)|_{\sigma(s)}^2ds
  \] 
  is the infimum over smooth curves $\zeta $ jointing $x_2$ and $x_1$ ($\zeta(0)=x_2$, $\zeta(1)=x_1$) of the averaged square velocity of $\zeta $ measured at time $\sigma(s)=(1-s)t_2+st_1$.  
\end{cor}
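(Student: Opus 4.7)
The plan is to execute the classical Li--Yau space-time integration procedure as in \cite{li1986parabolic, guenther2002The}, unifying the two geometric cases via a single parameter $\tau\in\{1,\tfrac{1}{2}\}$: one takes $\tau=1$ and invokes Corollary~\ref{coa} in the complete noncompact case, and $\tau=\tfrac{1}{2}$ and invokes Theorem~\ref{thb} in the closed case. Setting $f=\log u$, each estimate can be rearranged into the convenient form
\begin{equation*}
-f_t \;\le\; -\tfrac{1}{\alpha}|\nabla f|^2 + q + \hat A(f) + \tfrac{\tau n\alpha}{t} + C\alpha K,
\end{equation*}
where $K$ is the aggregate defined in the statement and $C=C(n,\alpha)$.

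For arbitrary $(x_1,t_1),(x_2,t_2)$ with $t_1<t_2$ and any smooth curve $\zeta\colon[0,1]\to M$ with $\zeta(0)=x_2$, $\zeta(1)=x_1$, I would parametrize time linearly by $\sigma(s)=(1-s)t_2+st_1$, so that $\sigma'(s)=t_1-t_2$. Differentiating along $(\zeta(s),\sigma(s))$ yields
\begin{equation*}
\tfrac{d}{ds}f(\zeta(s),\sigma(s)) \;=\; \langle\nabla f,\zeta'(s)\rangle_{\sigma(s)} - (t_2-t_1)\,f_t.
\end{equation*}
Apply Young's inequality
\begin{equation*}
\langle\nabla f,\zeta'\rangle \;\le\; \tfrac{\alpha\,|\zeta'|^2_{\sigma(s)}}{4(t_2-t_1)} + \tfrac{(t_2-t_1)}{\alpha}|\nabla f|^2
\end{equation*}
and substitute the above bound for $-f_t$; the $|\nabla f|^2$ terms cancel exactly, leaving a pointwise bound that is linear in $|\zeta'|^2_{\sigma(s)}$ and in $1/\sigma(s)$.

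Integrating this inequality over $s\in[0,1]$ and using the elementary identity $\int_0^1\tfrac{t_2-t_1}{\sigma(s)}\,ds=\log(t_2/t_1)$, the time contribution becomes $\tau n\alpha\log(t_2/t_1)$, which exponentiates to the factor $(t_2/t_1)^{\tau n\alpha}$. Taking the infimum over admissible curves $\zeta$ collapses the spatial integral into $\tfrac{\alpha Z}{4(t_2-t_1)}$ per the definition of $Z$, while the geometric aggregate contributes $C(t_2-t_1)K/\alpha$. Exponentiating the resulting estimate for $f(x_1,t_1)-f(x_2,t_2)=\log\bigl(u(x_1,t_1)/u(x_2,t_2)\bigr)$ produces \eqref{eqt}.

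The one delicate point is the reaction contribution $(t_2-t_1)\int_0^1(q+\hat A(f))\,ds$, since $q$ and $\hat A$ are not a priori pointwise controlled by the quantities listed in $K$. These have to be absorbed into the factor $C(t_2-t_1)K/\alpha$ by enlarging $K$ to incorporate the supremum norms of $q$ and $\hat A(f)$ that are actually available in the setting at hand; this is harmless in each of the concrete applications of Section~\ref{sec4}, where $A$ (and hence $\hat A$) is explicit. Once this absorption is made, the remaining computation is routine calculus, and the two cases are distinguished only through the value of $\tau$ recorded in the exponent of $t_2/t_1$.
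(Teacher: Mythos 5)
Your proposal follows essentially the same route as the paper's proof: use Corollary~\ref{coa} (with $\tau=1$) in the noncompact case and Theorem~\ref{thb} (with $\tau=\tfrac12$) in the closed case, integrate $\log u$ along the space-time path $(\zeta(s),\sigma(s))$, cancel the $|\nabla f|^2$ term via Young's inequality, and use $\int_0^1\frac{t_2-t_1}{\sigma(s)}\,ds=\log\frac{t_2}{t_1}$ before exponentiating and taking the infimum over $\zeta$. The only point of divergence is your explicit handling of the reaction terms: the paper simply restates the two estimates as $\frac{|\nabla u|^2}{u^2}-\alpha\frac{u_t}{u}\le\frac{\tau n\alpha^2}{t}+C(n,\alpha)K$, silently discarding the $-\alpha q-\alpha A(u)/u$ contribution, whereas you correctly note that $q+\hat A$ is not controlled by the stated $K$ (which bounds only $|\nabla q|$, $\Delta q$, $\hat A_f$, $\hat A_{ff}$) and must be absorbed by enlarging $K$ or by extra assumptions --- on this point your write-up is actually more careful than the paper's.
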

\begin{proof}
  The gradient estimate in Corollary~\ref{coa} and Theorem~\ref{thb} can both be written as 
  \[
  \frac{|\nabla u(x,t)|^2}{u^2(x,t)}-\alpha \frac{u_t(x,t)}{u(x,t)}\le \frac{\tau n\alpha^2}{t}+C(n,\alpha )K,
  \]
  for any $\alpha >1$. Take any curve $\zeta $ satisfying the assumption and define
  \[
  l(s)=\log u(\zeta(s),\sigma(s)).
  \]
  Then $l(0)=\log u(x_2,t_2)$ and $l(1)=\log u(x_1,t_1)$. A direct computation yields 
  \begin{equation*}
    \begin{split}
      \frac{dl(s)}{ds}=&(t_2-t_1)\left(\frac{\nabla u}{u}\frac{\zeta'(s)}{t_2-t_1}-\frac{u_t}{u}\right)\\
      \le &\frac{\alpha |\zeta'(s)|_\sigma^2}{4(t_2-t_1)}+\frac{t_2-t_1}{\alpha }\left(\frac{\tau n\alpha^2}{\sigma(s)}+CK\right).
    \end{split}
  \end{equation*} 
  Integrating this inequality over $\zeta(s)$, we have
  \begin{equation*}
    \begin{split}
      \log \frac{u(x_1,t_1)}{u(x_2,t_2)}=&\int_0^1\frac{dl(s)}{ds}ds\\
      \le &\int_0^1\frac{\alpha |\zeta'(s)|_\sigma^2}{4(t_2-t_1)}+C\frac{t_2-t_1}{\alpha }K+\tau n\alpha \log \frac{t_2}{t_1},
    \end{split}
  \end{equation*}
  which implies the corollary.
\end{proof}

\section{Elliptic type gradient estimates for bounded positive solutions}\label{sec3}

Now we establish elliptic type gradient estimates for \eqref{eqa}--\eqref{eqb}. Firstly we give the local version.
\begin{thm}\label{thc}
  Let $(M,g(t))$ be a complete solution to \eqref{eqb} for $t\in [0,T]$ and let $u$ be a positive solution to \eqref{eqa}. Suppose that there exist constants $L>0$ and $K\ge 0$, such that $u\le L$ and 
  \[
  \Ric \ge -K_1g,\quad h\ge -K_2g
  \]
  on $Q_{2R,T}$. Then we have 
  \begin{equation}\label{eqp}
    \begin{split}
      \frac{|\nabla u(x,t)|}{u(x,t)}\le \tilde C\left(\frac{1}{\sqrt{t}}+\frac{1}{R}+\sqrt{H}\right)\left(1+\log \frac{L}{u(x,t)}\right)
    \end{split}
  \end{equation}
  on $Q_{R,T}$, where $\tilde C$ is a constant that depends only on $n$ and
  \[
  H=K_1+K_2+\max_{Q_{2R,T}}(|q|-q)+\max_{Q_{2R,T}}\frac{|\nabla q|}{\sqrt{|q|}}+\kappa_{2R}.
  \]
\end{thm}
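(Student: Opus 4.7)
The plan is to follow the Souplet--Zhang paradigm, adapted to the evolving metric and the nonlinear terms $qu+A(u)$. Since $u\le L$, set $f=\log u$ and $v=1+\log(L/u)\ge 1$, so that $|\nabla v|=|\nabla u|/u$ and, by \eqref{eqc},
\[
(\Delta -\partial_t)v=|\nabla v|^2-q-\hat A.
\]
The target quantity is $F:=|\nabla v|^2/v^2$, since the desired estimate is equivalent to the pointwise bound $F\le \tilde C^2(1/t+1/R^2+H)$ on $Q_{R,T}$.

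First I would compute $(\Delta -\partial_t)F$ by expanding through the chain rule and substituting the identity above, using the Bochner formula together with Lemma~\ref{lea} to control the metric dependence of $|\nabla v|^2$. Applying $\Ric\ge -K_1g$, $h\ge -K_2g$, and $\hat A_f\ge -\kappa_{2R}$ (the last coming directly from the definition of $\kappa_{2R}$), I expect a lower bound of the schematic form
\[
(\Delta -\partial_t)F\ge \frac{2|\Hess v|^2}{v^2}+\frac{2\langle \nabla v,\nabla |\nabla v|^2\rangle}{v^2}-\frac{4\langle \nabla v,\nabla |\nabla v|^2\rangle}{v^3}+(6-2v)F^2+\frac{2F(q+\hat A)}{v}-\frac{2\langle \nabla v,\nabla q\rangle}{v^2}-2(K_1+K_2+\kappa_{2R})F.
\]

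Next I would introduce the cutoff $\Phi(x,t)=\phi(d(x,x_0,t)/R)$ from the proof of Theorem~\ref{tha} and consider $G=t\Phi F$ on $Q_{2R,T_1}$ for arbitrary $T_1\in (0,T]$. At an interior maximum $(x_1,t_1)$, the first-order condition $\nabla G=0$ gives $\nabla|\nabla v|^2=-v^2F\nabla\Phi/\Phi+2Fv\nabla v$. Substituting this identity collapses the three explicit $F^2$ contributions displayed above to $2(v-1)F^2$ plus manageable cutoff cross-terms. The decisive positive $F^2$ contribution is then extracted from the Hessian piece via the Cauchy--Schwarz inequality
\[
|\Hess v|^2\ge \frac{(\Hess v(\nabla v,\nabla v))^2}{|\nabla v|^4}=\frac{\langle\nabla v,\nabla |\nabla v|^2\rangle^2}{4|\nabla v|^4},
\]
which at $(x_1,t_1)$ yields $2|\Hess v|^2/v^2\ge 2F^2$ up to cutoff-gradient errors. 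Together with the previous step the net coercive contribution is at least $2vF^2\ge 2F^2$.

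Finally, combining $(\Delta -\partial_t)G\le 0$ at $(x_1,t_1)$ with the above yields a quadratic inequality in $G$. Young's inequality converts $2|\langle\nabla v,\nabla q\rangle|/v^2$ into a small $\varepsilon F$-part (absorbable by the coercive term) plus a residue of order $(\max|\nabla q|/\sqrt{|q|})^2$, after inserting the algebraic identity $|\nabla q|^2=|q|(|\nabla q|/\sqrt{|q|})^2$ and using $v\ge 1$. The forcing term $2F(q+\hat A)/v$ is handled by completing the square against the coercive $2vF^2$: this manufactures $\max(|q|-q)$ in the final bound, and the $\hat A$-contribution is absorbed using $\hat A=A'(u)-\hat A_f$ and the $\kappa_{2R}$-estimate together with a small-parameter Young's inequality. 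The cutoff-induced $\Delta\Phi$, $\Phi_t$ and $|\nabla\Phi|^2/\Phi$ terms give the standard $1/R^2$, $\sqrt{K_1}/R$ and $K_2$ contributions, exactly as in the proof of Theorem~\ref{tha}, via the Laplacian comparison theorem and the uniform equivalence of $g(t)$. Solving the resulting quadratic $aG^2\le bG+c$ gives $F\le \tilde C^2(1/t+1/R^2+H)$ on $Q_{R,T_1}$; since $T_1\in (0,T]$ is arbitrary and $\Phi\equiv 1$ on $Q_{R,T}$, taking square roots and recalling $v=1+\log(L/u)$ yields the theorem. I expect the main obstacle to be the third step: correctly identifying and extracting the positive $F^2$ coefficient from the Hessian piece at the maximum and arranging the many cutoff cross-terms so that the surviving $q$- and $A$-dependent quantities in the final bound are exactly those in $H$.
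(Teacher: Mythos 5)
Your overall route is the paper's: with $v=1+\log(L/u)$ your quantity $|\nabla v|^2/v^2$ is exactly the paper's $w=|\nabla \log(1-f)|^2$ (after normalizing $u\le L$ to $u\le 1$), your schematic differential inequality agrees with Lemma~\ref{lec} after this change of variables, and the cutoff/maximum-point argument together with the collapse of the $F^2$-contributions to the coercive $2vF^2$ is correct. The genuine gap is in how you extract the specific quantities appearing in $H$ from the two forcing terms; as literally described, both treatments would not give the stated bound. For the $A$-term: in your schematic you have already spent $\hat A_f\ge -\kappa_{2R}$ on the standalone term $2\hat A_fF$, while keeping $2\hat AF/v$ separate and proposing to handle it via $\hat A=A'(u)-\hat A_f$ plus a Young inequality. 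But then the leftover piece $-2\hat A_fF/v$ needs an \emph{upper} bound on $\hat A_f$, i.e. $\Lambda_{2R}$, which is neither assumed nor permitted in $H$, and no small-parameter Young's inequality can fix a sign problem in a term that is linear in $F$ with an uncontrolled coefficient. The correct move --- and the point the paper singles out in its remark --- is to keep the two contributions together and use the convex combination
\[
\hat A_f+\frac{\hat A}{v}=\frac{v-1}{v}\,\hat A_f+\frac{1}{v}\bigl(\hat A_f+\hat A\bigr)=\frac{v-1}{v}\,\hat A_f+\frac{1}{v}A'(u)\ \ge\ -\kappa_{2R},
\]
since $\tfrac{v-1}{v},\tfrac1v\in[0,1]$ and sum to one.

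For the $q$-terms: completing the square of $2qF/v$ against the coercive $2vF^2$ leaves a residue of order $\max q^{2}$ (hence $\max|q|$ after the quadratic formula), and likewise absorbing the Young by-product of the $\nabla q$-term into $2vF^2$ produces $|q|^{2}$; neither reduces to $\max_{Q_{2R,T}}(|q|-q)$, which vanishes when $q\ge 0$, so the stated $H$ would not come out. What works (and is what the paper does) is the $|q|$-weighted Young inequality on the gradient term, $2|\nabla q|\,|\nabla v|/v^{2}\le |q|F/v+|\nabla q|^{2}/(|q|v)$, followed by pairing the piece $|q|F/v$ with the forcing term $2qF/v$: using $q-|q|\le 0$ and $1/v\le 1$ this yields a term $-C\max_{Q_{2R,T}}(|q|-q)\,F$, while the residue $|\nabla q|^{2}/(|q|v)$ contributes $\max_{Q_{2R,T}}(|\nabla q|/\sqrt{|q|})$ to $H$ after solving the quadratic inequality. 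With these two regroupings your argument closes and coincides with the paper's proof; without them it proves a weaker estimate involving $\Lambda_{2R}$ and $\max|q|$ rather than \eqref{eqp}.
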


\begin{rem}
  In \cite[Theorem~1.9]{chen2018Li}, the authors gave a elliptic type gradient estimate for bounded positive solutions of \eqref{eqa} with a convection term on a complete manifold, where the metric does not depend on time. In the estimate of \cite[Theorem~1.9]{chen2018Li}, the upper bound induced by the term $A(u)$ is 
  \[
  -\min \left\{0,\min_{Q_{2R,T}}(A'(u)-A(u)/u)\right\}-\min \left\{0,\min_{Q_{2R,T}}(A(u)/u)\right\}
  \]
  instead of $\kappa_{2R}$ here. Compare with \cite[Theorem~1.9]{chen2018Li}, we see that our estimate \eqref{eqp} is sharper. In fact, in general, for real numbers $x,y$, a direct calculation yields
  \[
  \min \{0,x\}+\min \{0,y\}\le \min \{0,x+y\}.
  \]
  Similarly, for two functions $f,g$ on the same domain $D$, the following obvious fact holds:
  \[
  \min_Df+\min_Dg\le \min_D(f+g).
  \]
  By the above two inequalities we obtain
  \begin{equation*}
    \begin{split}
      &\min \left\{0,\min_{Q_{2R,T}}(A'(u)-A(u)/u)\right\}+\min \left\{0,\min_{Q_{2R,T}}(A(u)/u)\right\}\\
      \le &\min \left\{0,\min_{Q_{2R,T}}\left(A'(u)-A(u)/u\right)+\min_{Q_{2R,T}}\left(A(u)/u\right)\right\}\\
      \le &\min \left\{0,\min_{Q_{2R,T}}A'(u)\right\}.
    \end{split}
  \end{equation*}
  On the other hand, it is obvious that  
  \begin{equation*}
    \begin{split}
      &\min \left\{0,\min_{Q_{2R,T}}(A'(u)-A(u)/u)\right\}+\min \left\{0,\min_{Q_{2R,T}}(A(u)/u)\right\}\\
      \le &\min \left\{0,\min_{Q_{2R,T}}(A'(u)-A(u)/u)\right\}.
    \end{split}
  \end{equation*}
  In conclusion, 
  \begin{equation*}
    \begin{split}
      &\min \left\{0,\min_{Q_{2R,T}}(A'(u)-A(u)/u)\right\}+\min \left\{0,\min_{Q_{2R,T}}(A(u)/u)\right\}\\
      \le &\min \left\{0,\min_{Q_{2R,T}}(A'(u)-A(u)/u),\min_{Q_{2R,T}}A'(u)\right\}=-\kappa_{2R}.
    \end{split}
  \end{equation*}
  That is,
  \[
  \kappa_{2R}\le -\min \left\{0,\min_{Q_{2R,T}}(A'(u)-A(u)/u)\right\}-\min \left\{0,\min_{Q_{2R,T}}(A(u)/u)\right\}.
  \]
  And we will see in the proof of Theorem~\ref{thc} that this sharper estimate comes from a more careful treatment of the term $\hat A_f+\frac{\hat A(f)}{1-f}$. However, the treatment we give here is not necessarily optimal. It is possible that a sharper estimate will be applied to more equations.
\end{rem}

Now we are ready to prove Theorem~\ref{thc}. Noticing that if $0<u\le L$ is a solution to \eqref{eqa}, then $\tilde u=\frac{u}{L}$ is a solution to the equation 
\[
(\Delta -q(x,t)-\partial_t)u(x,t)=\frac{1}{L}A(L\tilde u(x,t))
\] 
and $0<\tilde u\le 1$. Hence, we can assume that $0<u\le 1$ in the proof of Theorem~\ref{thc}. Similar to the proof of Theorem~\ref{tha}, we need a auxiliary lemma. We still set $f=\log u\le 0$ and $\hat A(f)=\frac{A(u)}{u}$. In this case, we define $w=|\nabla \log(1-f)|^2$ and $F(x,t)=tw(x,t)$. 
\begin{lem}\label{lec}
  Let $(M,g(t))$ be a complete solution to \eqref{eqb} for $t\in [0,T]$ and let $u\in (0,1]$ be a solution to \eqref{eqa}. Suppose that there exists a constant $K\ge 0$, such that 
  \[
  \Ric +h\ge -Kg
  \]
  on $Q_{2R,T}$. Then we have 
  \begin{equation}\label{eqq}
    \begin{split}
      (\Delta -\partial_t)F\ge &\frac{2(1-f)}{t}F^2-2\left(K-\hat A_f-\frac{q}{1-f}-\frac{\hat A(f)}{1-f}\right)F\\
      &-\frac{F}{t}-2f\langle \nabla \log(1-f),\nabla F\rangle -\frac{2t}{1-f}\langle \nabla \log(1-f),\nabla q\rangle .
    \end{split}
  \end{equation}
  on $Q_{2R,T}$.
\end{lem}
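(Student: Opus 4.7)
The plan is to perform a change of variables $\omega := \log(1-f) = \log(1 - \log u)$ (valid since $u \le 1$ implies $f \le 0$ and so $1-f \ge 1$), derive the parabolic equation satisfied by $\omega$, and then apply the Bochner formula combined with Lemma~\ref{lea} to $w = |\nabla \omega|^2$. The key identity linking the two variables is $\nabla f = -(1-f)\nabla \omega$, which will let us rewrite every stray occurrence of $\nabla f$ as a multiple of $\nabla \omega$ and expose the terms appearing in \eqref{eqq}.

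\textbf{Step 1.} Starting from \eqref{eqc}, namely $\Delta f - f_t = q + \hat A(f) - |\nabla f|^2$, and using $\nabla f = -(1-f)\nabla \omega$, $\Delta f = -(1-f)(\Delta \omega + w)$, $f_t = -(1-f)\omega_t$, I will rearrange to obtain the equation
\[
\Delta \omega - \omega_t = -f\,w - \frac{q + \hat A(f)}{1-f}.
\]

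\textbf{Step 2.} Applying the Bochner formula to $\omega$ and using Lemma~\ref{lea} to handle $\partial_t|\nabla \omega|^2$ gives
\[
\tfrac{1}{2}(\Delta - \partial_t)w = |\Hess \omega|^2 + (\Ric + h)(\nabla \omega, \nabla \omega) + \langle \nabla \omega, \nabla(\Delta \omega - \omega_t)\rangle.
\]
I will then differentiate the equation from Step~1 and take the inner product with $\nabla \omega$. After rewriting $\nabla f = -(1-f)\nabla \omega$ and $\nabla \hat A(f) = \hat A_f\,\nabla f$, the terms collapse neatly to
\[
\langle \nabla \omega, \nabla(\Delta \omega - \omega_t)\rangle = (1-f)w^2 - f\langle \nabla \omega, \nabla w\rangle + \left(\hat A_f + \frac{q + \hat A(f)}{1-f}\right) w - \frac{\langle \nabla \omega, \nabla q\rangle}{1-f}.
\]

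\textbf{Step 3.} Dropping the nonnegative term $|\Hess \omega|^2$ and using $\Ric + h \ge -Kg$ yields
\[
\tfrac{1}{2}(\Delta - \partial_t)w \ge (1-f)w^2 - f\langle \nabla \omega, \nabla w\rangle + \left(\hat A_f + \frac{q + \hat A(f)}{1-f} - K\right)w - \frac{\langle \nabla \omega, \nabla q\rangle}{1-f}.
\]
Finally, since $F = tw$, we have $(\Delta - \partial_t)F = t(\Delta - \partial_t)w - w$, so multiplying the previous inequality by $2t$ and translating $t w^2 = F^2/t$, $t\nabla w = \nabla F$, produces exactly \eqref{eqq}.

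The computation itself is forced once the change of variable is chosen, so the main obstacle is purely bookkeeping: one must carefully substitute $\nabla f = -(1-f)\nabla \omega$ in several places (differentiating both $fw$ and $(q + \hat A(f))/(1-f)$), and verify that all the ``parasitic'' terms produced by the product and quotient rules combine into the single coefficient $\hat A_f + (q + \hat A(f))/(1-f)$ in front of $w$. Once this cancellation is checked, dropping $|\Hess \omega|^2$ and scaling by $2t$ finishes the proof.
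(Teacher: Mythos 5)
Your proposal is correct and follows essentially the same route as the paper: both rest on the Bochner formula applied to $\log(1-f)$, the evolution identity of Lemma~\ref{lea}, the substitution of equation \eqref{eqc} (equivalently $\Delta\omega-\omega_t=-fw-\frac{q+\hat A}{1-f}$), and the bound $\Ric+h\ge -Kg$, followed by dropping $|\Hess\omega|^2$ and rescaling by $t$. The only difference is bookkeeping — you fold $f_t$ into the heat-operator equation for $\omega$ before differentiating, whereas the paper computes $\Delta F$ and $F_t$ separately and lets the $f_t$-terms cancel — and I verified your claimed collapse of terms into the coefficient $\hat A_f+\frac{q+\hat A}{1-f}$ is accurate.
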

\begin{proof}
  By the Bochner formula we have
  \begin{equation*}
    \begin{split}
      \Delta F=&2t|\Hess \log(1-f)|^2+2t\Ric (\nabla \log(1-f),\nabla \log(1-f))\\
      &+2t\langle \nabla \log(1-f),\nabla \Delta \log(1-f)\rangle .
    \end{split}
  \end{equation*}
  However, by \eqref{eqc}, 
  \begin{equation*}
    \begin{split}
      \Delta \log(1-f)=&-\frac{\Delta f}{1-f}-\frac{|\nabla f|^2}{(1-f)^2}=-fw-\frac{f_t+q+\hat A}{1-f}.
    \end{split}
  \end{equation*} 
  Therefore, we obtain
  \begin{equation*}
    \begin{split}
      \Delta F=&2t|\Hess \log(1-f)|^2+2t\Ric (\nabla \log(1-f),\nabla \log(1-f))\\
      &+\frac{2(1-f)}{t}F^2-2f\langle \nabla \log(1-f),\nabla F\rangle \\
      &+\frac{2}{1-f}(f_t+q+\hat A)F-\frac{2t}{1-f}\langle \nabla \log(1-f),\nabla (f_t)\rangle \\
      &-\frac{2t}{1-f}\langle \nabla \log(1-f),\nabla q\rangle -\frac{2t\hat A_f}{1-f}\langle \nabla \log(1-f),\nabla f\rangle .
    \end{split}
  \end{equation*}
  On the other hand, by the first equality of Lemma~\ref{lea},
  \begin{equation*}
    \begin{split}
      F_t=&-2th(\nabla \log(1-f),\nabla \log(1-f))\\
      &+2t\langle \nabla \log(1-f),\nabla ((\log(1-f))_t)\rangle +\frac{F}{t}\\
      =&-2th(\nabla \log(1-f),\nabla \log(1-f))\\
      &-\frac{2t}{1-f}\langle \nabla \log(1-f),\nabla (f_t)\rangle +\frac{2f_tF}{1-f}+\frac{F}{t}.
    \end{split}
  \end{equation*}
  Combining the above two equalities, we get 
  \begin{equation*}
    \begin{split}
      (\Delta -\partial_t)F\ge &\frac{2(1-f)}{t}F^2+2t(\Ric +h)(\nabla \log(1-f),\nabla \log(1-f))\\
      &+2\hat A_fF+\frac{2}{1-f}(q+\hat A)F-2f\langle \nabla \log(1-f),\nabla F\rangle \\
      &-\frac{2t}{1-f}\langle \nabla \log(1-f),\nabla q\rangle -\frac{F}{t}.
    \end{split}
  \end{equation*}
  The lemma follows from the assumption on bound of $\Ric +h$.
\end{proof}
 
\begin{rem}
  It is easy to see that we don't need any assumption on the Ricci tensor if geometric flow \eqref{eqb} is the Ricci flow, i.e., $h=-\Ric$.  
\end{rem}

\begin{proof}[The proof of Theorem~\ref{thc}]
  Choosing $\phi $ and $\eta $ as in the proof of Theorem~\ref{tha}. For any $T_1\in (0,T]$, let $(x_1,t_1)\in Q_{2R,T_1}$, at which $G(x,t)=\eta(x,t)F(x,t)$ attains its maximum, and without loss of generality, we can assume $G(x_1,t_1)>0$, and then $\eta(x_1,t_1)>0$ and $F(x_1,t_1)>0$. By Lemma~\ref{lec} and a similar argument as in the proof of Theorem~\ref{tha}, we have at $(x_1,t_1)$, 
  \begin{equation*}
    \begin{split}
      0\ge &\frac{2\eta (1-f)}{t_1}F^2-2\eta \left(K_1+K_2-\hat A_f-\frac{q}{1-f}-\frac{\hat A(f)}{1-f}\right)F\\
      &-2\eta f\langle \nabla \log(1-f),\nabla F\rangle -\frac{2t_1\eta }{1-f}\langle \nabla \log(1-f),\nabla q\rangle \\
      &-\frac{\eta F}{t_1}-\left(\frac{C_3(n)}{R^2}+\frac{C_4(n)}{R}\sqrt{K_1}+C_2K_2\right)F\\
      =&\frac{2\eta (1-f)}{t_1}F^2-2\eta \left(K_1+K_2-\hat A_f-\frac{q}{1-f}-\frac{\hat A(f)}{1-f}\right)F\\
      &+2f\langle \nabla \log(1-f),\nabla \eta \rangle F-\frac{2t_1\eta }{1-f}\langle \nabla \log(1-f),\nabla q\rangle \\
      &-\frac{\eta F}{t_1}-\left(\frac{C_3(n)}{R^2}+\frac{C_4(n)}{R}\sqrt{K_1}+C_2K_2\right)F.
    \end{split}
  \end{equation*}
  Multiplying both sides of the above inequality by $t_1\eta $, we have 
  \begin{equation*}
    \begin{split}
      0\ge &2(1-f)G^2-2t_1\eta \left(K_1+K_2-\hat A_f-\frac{q}{1-f}-\frac{\hat A(f)}{1-f}\right)G\\
      &+2t_1f\langle \nabla \log(1-f),\nabla \eta \rangle G-\frac{2t_1^2\eta^2}{1-f}\langle \nabla \log(1-f),\nabla q\rangle \\
      &-\eta G-t_1\left(\frac{C_3(n)}{R^2}+\frac{C_4(n)}{R}\sqrt{K_1}+C_2K_2\right)G.
    \end{split}
  \end{equation*}
  Noticing that $f\le 0$, by Young's inequality, 
  \begin{equation*}
    \begin{split}
      -2t_1f\langle \nabla \log(1-f),\nabla \eta \rangle G\le &-2ft_1^{\frac{1}{2}}\frac{|\nabla \eta |}{\eta^{\frac{1}{2}}}G^{\frac{3}{2}}\le -2ft_1\frac{\sqrt{C_1}}{R}G^{\frac{3}{2}}\\
      \le &(1-f)G^2+\frac{27t_1^2}{16}\frac{C_1^2}{R^4}\frac{f^4}{(1-f)^3}
    \end{split}
  \end{equation*}
  and
  \begin{equation*}
    \begin{split}
      \langle \nabla \log(1-f),\nabla q\rangle \le |\nabla q|w^{\frac{1}{2}}\le |q|w+\frac{|\nabla q|^2}{4|q|}.
    \end{split}
  \end{equation*}
  Combining the above three inequalities we have
  \begin{equation*}
    \begin{split}
      0\ge &(1-f)G^2-2t_1\eta \left(K_1+K_2-\hat A_f-\frac{q}{1-f}-\frac{\hat A(f)}{1-f}\right)G\\
      &-\frac{27t_1^2}{16}\frac{C_1^2}{R^4}\frac{f^4}{(1-f)^3}-\frac{2t_1\eta}{1-f}|q|G-\frac{t_1^2\eta2}{2(1-f)}\frac{|\nabla q|^2}{|q|}\\
      &-\eta G-t_1\left(\frac{C_3(n)}{R^2}+\frac{C_4(n)}{R}\sqrt{K_1}+C_2K_2\right)G.
    \end{split}
  \end{equation*}
  From $0\le \frac{1}{1-f}<1$ and $0<\frac{-f}{1-f}\le 1$, we see that 
  \begin{equation*}
    \begin{split}
      \hat A_f+\frac{\hat A(f)}{1-f}=&\frac{-f}{1-f}\hat A_f+\frac{1}{1-f}\left(\hat A_f+\hat A(f)\right)\\
      \ge &\frac{-f}{1-f}\min \left\{0,\min_{Q_{2R,T}}\hat A_f\right\}+\frac{1}{1-f}\min \left\{0,\min_{Q_{2R,T}}A'(u)\right\}\\
      \ge &\min \left\{0,\min_{Q_{2R,T}}\hat A_f,\min_{Q_{2R,T}}A'(u)\right\}\\
      =&-\kappa_{2R}.
    \end{split}
  \end{equation*}

  By $0<\eta(x_1,t_1)\le 1$, we get
  \begin{equation*}
    \begin{split}
      0\ge &(1-f)G^2-G-2t_1\left(K_1+K_2+\max_{Q_{2R,T}}(|q|-q)+\kappa_{2R}\right)G\\
      &-t_1\left(\frac{C_3(n)}{R^2}+\frac{C_4(n)}{R}\sqrt{K_1}+C_2K_2\right)G-\frac{27t_1^2}{16}\frac{C_1^2}{R^4}\frac{f^4}{(1-f)^3}-\frac{t_1^2}{2}\max_{Q_{2R,T}}\frac{|\nabla q|^2}{|q|}.
    \end{split}
  \end{equation*}
  Applying the quadratic formula and the inequality of arithmetic and geometric means 
  \[
  \frac{\sqrt{K_1}}{R}\le \frac{1}{2R^2}+\frac{K_1}{2},
  \]
  and noticing the fact $0\le \frac{-f}{1-f}<1$ again, we obtain 
  \begin{equation*}
    \begin{split}
      G\le &C_6(n)\left(1+t_1\left(\frac{1}{R^2}+H\right)\right),
    \end{split}
  \end{equation*}
  where $C_6(n)$ is a constant that depends only on $n$ and
  \[
  H=K_1+K_2+\max_{Q_{2R,T}}(|q|-q)+\max_{Q_{2R,T}}\frac{|\nabla q|}{\sqrt{|q|}}+\kappa_{2R}.
  \]
  Noticing that $d(x,x_0,T_1)\le R$ implies $\eta(x,T_1)=1$, we can get
  \begin{equation*}
    \begin{split}
      w(x,T_1)=\frac{F(x,T_1)}{T_1}\le \frac{G(x_1,t_1)}{T_1}\le C_6(n)\left(\frac{1}{T_1}+\frac{1}{R^2}+H\right).
    \end{split}
  \end{equation*}
  Since $T_1$ is arbitrary, and using $\sqrt{x+y}\le \sqrt{x}+\sqrt{y}$, we complete the proof.
\end{proof}

Similar to Corollary~\ref{coa}, when $(M,g(0))$ is a complete noncompact Riemannian manifold without boundary and $g(t)$ evolves by \eqref{eqb}, we can obtain a global estimate from Theorem~\ref{thc} by taking $R\to 0$. 
\begin{cor}\label{cob}
  Let $(M,g(t))$ be a complete solution to \eqref{eqb} for $t\in [0,T]$ and $(M,g(0))$ be a complete noncompact Riemannian manifold without boundary. Let $u$ be a positive solution to \eqref{eqa}. Suppose that there exist constants $L>0$ and $K\ge 0$, such that $u\le L$ and 
  \[
  \Ric \ge -K_1g,\quad h\ge -K_2g.
  \]
  Then we have 
  \begin{equation}\label{eqp}
    \begin{split}
      \frac{|\nabla u(x,t)|}{u(x,t)}\le \tilde C\left(\frac{1}{\sqrt{t}}+\sqrt{H}\right)\left(1+\log \frac{L}{u(x,t)}\right)
    \end{split}
  \end{equation}
  on $M\times [0,T]$, where $\tilde C$ as in Theorem~\ref{thc} and
  \[
  H=K_1+K_2+\sup_{M\times [0,T]}(|q|-q)+\sup_{M\times [0,T]}\frac{|\nabla q|}{\sqrt{|q|}}+\kappa .
  \]
\end{cor}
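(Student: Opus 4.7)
The plan is to deduce Corollary~\ref{cob} from Theorem~\ref{thc} by the same limiting device used in Corollary~\ref{coa}: pick a basepoint, apply the local estimate on larger and larger geodesic cubes, and send the radius to infinity. Since the completeness of $(M,g(t))$ for each $t\in[0,T]$ is built into the hypothesis, I do not need to invoke any uniform equivalence of metrics; the geodesic cubes $Q_{2R,T}$ centered at a fixed $x_0\in M$ exhaust $M\times[0,T]$ as $R\to\infty$.

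Fix $x_0\in M$ and an arbitrary $R>0$. The global bounds $\Ric\ge -K_1g$, $h\ge -K_2g$ and $u\le L$ pass to $Q_{2R,T}$ with the same constants, so Theorem~\ref{thc} delivers
\[
\frac{|\nabla u(x,t)|}{u(x,t)}\le \tilde C\left(\frac{1}{\sqrt{t}}+\frac{1}{R}+\sqrt{H_{2R}}\right)\left(1+\log \frac{L}{u(x,t)}\right)
\]
on $Q_{R,T}$, with $H_{2R}=K_1+K_2+\max_{Q_{2R,T}}(|q|-q)+\max_{Q_{2R,T}}|\nabla q|/\sqrt{|q|}+\kappa_{2R}$. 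Because $Q_{2R,T}\subset M\times[0,T]$, the three quantities defined by supremum over $Q_{2R,T}$ are each dominated by the corresponding supremum over $M\times[0,T]$; in particular $\kappa_{2R}\le\kappa$ by the monotonicity of infimum under domain inclusion applied to each of the three minimands defining $\kappa$. Hence $H_{2R}\le H$, and since the right-hand side of the inequality is monotone increasing in $H_{2R}$, we may replace $H_{2R}$ by $H$ without affecting validity.

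Now fix any $(x,t)\in M\times(0,T]$. For every $R$ large enough that $d(x,x_0,t)\le R$, the above estimate applies at $(x,t)$ with $H$ in place of $H_{2R}$. Letting $R\to\infty$ eliminates the $1/R$ term and yields exactly \eqref{eqp} on $M\times(0,T]$; at $t=0$ the inequality is vacuous because of the $1/\sqrt t$ factor. There is no substantive obstacle here: the argument is a pure monotonicity-plus-limit passage, and all the real work — Lemma~\ref{lec}, the cutoff construction, and the Young-inequality estimates — has been done inside Theorem~\ref{thc}. The only point that deserves a line of verification is the monotonicity $\kappa_{2R}\le\kappa$, which, given the slightly awkward definition of $\kappa$ as an infimum of a minimum, is the one place a careless reader might stumble.
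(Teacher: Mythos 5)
Your argument is correct and is exactly the route the paper intends: Corollary~\ref{cob} is stated as following from Theorem~\ref{thc} by the same limiting device as Corollary~\ref{coa}, namely applying the local estimate on $Q_{2R,T}$, bounding the local quantities (including $\kappa_{2R}\le\kappa$) by their global counterparts, and letting $R\to\infty$ (the paper's ``$R\to 0$'' is a typo). Your observation that no uniform-equivalence argument is needed here, since completeness of $(M,g(t))$ for all $t$ is already hypothesized, is also consistent with the paper.
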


The following corollary gives a elliptic Harnack inequality by integrating the elliptic type gradient estimate \eqref{eqp} in space only. Unlike Corollary~\ref{coc}, this inequality can compare the function values at two spatial points at the same time, but inequality \eqref{eqt} cannot.
\begin{cor}\label{cod}
  Let $(M,g(t))$ be a complete solution to \eqref{eqb} for $t\in [0,T]$ and $(M,g(0))$ be a complete noncompact Riemannian manifold without boundary. Let $u$ be a positive solution to \eqref{eqa}. Suppose that there exist constants $L>0$ and $K\ge 0$, such that $u\le L$ and 
  \[
  \Ric \ge -K_1g,\quad h\ge -K_2g.
  \]
  Then for any $x_1,x_2\in M$, we have 
  \begin{equation}\label{equ}
    \begin{split}
      u(x_1,t)\ge u(x_2,t)^{\mathcal H}\exp ((1-\mathcal H)(1+\log L)).
    \end{split}
  \end{equation}
  in each $t\in [0,T]$. Here, $\mathcal H=\exp \left(\tilde C\left(\frac{1}{\sqrt{t}}+\sqrt{H}\right)d(x_1,x_2,t)\right)$,
  where $\tilde C$ as in Theorem~\ref{thc} and
  \[
  H=K_1+K_2+\sup_{M\times [0,T]}(|q|-q)+\sup_{M\times [0,T]}\frac{|\nabla q|}{\sqrt{|q|}}+\kappa .
  \] 
\end{cor}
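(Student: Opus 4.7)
The plan is to integrate the elliptic gradient estimate of Corollary~\ref{cob} along a minimizing geodesic at fixed time $t$ after a logarithmic change of variables. Fix $t\in(0,T]$ (the case $t=0$ is vacuous since $\mathcal H=+\infty$). Set
\[
v(x,t):=1+\log\frac{L}{u(x,t)}.
\]
Because $0<u\le L$, we have $v\ge 1>0$ and $\nabla v=-\nabla u/u$, hence $|\nabla v|=|\nabla u|/u$. Corollary~\ref{cob} then reads
\[
|\nabla v(x,t)|\le \tilde C\Bigl(\tfrac{1}{\sqrt t}+\sqrt H\Bigr)\,v(x,t),
\]
which is exactly $|\nabla\log v(x,t)|\le C_0$ with $C_0:=\tilde C(1/\sqrt t+\sqrt H)$; this is the single ingredient I need from Section~\ref{sec3}.

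Next, since $(M,g(t))$ is complete (as noted at the start of the proof of Theorem~\ref{tha}), Hopf--Rinow provides a minimizing $g(t)$-geodesic $\gamma:[0,1]\to M$ with $\gamma(0)=x_2$, $\gamma(1)=x_1$, whose $g(t)$-length equals $d(x_1,x_2,t)$. I then integrate the bound on $|\nabla\log v|$ along $\gamma$ with $t$ held fixed:
\[
\log v(x_1,t)-\log v(x_2,t)=\int_0^1\frac{d}{ds}\log v(\gamma(s),t)\,ds\le C_0\int_0^1|\gamma'(s)|_{g(t)}\,ds=C_0\,d(x_1,x_2,t).
\]
Exponentiating gives $v(x_1,t)\le\mathcal H\,v(x_2,t)$, i.e.
\[
1+\log L-\log u(x_1,t)\le\mathcal H\bigl(1+\log L-\log u(x_2,t)\bigr).
\]

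Finally, I rearrange to solve for $\log u(x_1,t)$:
\[
\log u(x_1,t)\ge(1-\mathcal H)(1+\log L)+\mathcal H\,\log u(x_2,t),
\]
and exponentiating yields the stated inequality \eqref{equ}. There is no real obstacle beyond this algebra; the only care point is making sure the sign convention works out, which it does because $u\le L$ forces $v\ge 1$, so $\log v$ is well defined and nonnegative and $\mathcal H\ge 1$.
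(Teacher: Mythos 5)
Your proposal is correct and is essentially the paper's own argument: the paper likewise integrates the estimate of Corollary~\ref{cob} along a minimizing $g(t)$-geodesic from $x_2$ to $x_1$, working with $l(s)=\log\bigl(1+\log L-\log u(\zeta(s),t)\bigr)$, which is exactly your $\log v$ along the curve, and then exponentiates and rearranges to obtain \eqref{equ}. No substantive difference.
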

\begin{proof}
  For any fixed $t$ and any $x_1,x_2\in M$, let $\zeta :[0,1]\to M$ is the geodesic of minimal length, which connecting $x_2$ and $x_1$, $\zeta(0)=x_2$ and $\zeta(1)=x_1$. Let $f=\log u$ and 
  \[
  l(s)=\log(1+\log L-f(\zeta(s),t)).
  \]
  By Corollary~\ref{cob} we have
  \begin{equation*}
    \begin{split}
      \frac{dl(s)}{ds}=&\frac{-\langle (\nabla f)(\zeta(s),t),\zeta'(s)\rangle }{1+\log L-f(\zeta(s),t)}\\
      \le &|\zeta'(s)|\cdot \frac{|\nabla f|(\zeta(s),t)}{1+\log L-f(\zeta(s),t)}\\
      \le &\tilde C|\zeta'(s)|\left(\frac{1}{\sqrt{t}}+\sqrt{H}\right).
    \end{split}
  \end{equation*}
  Integrating this inequality over $\zeta(s)$, we have
  \begin{equation*}
    \begin{split}
      \log \frac{1+\log L-f(x_1,t)}{1+\log L-f(x_2,t)}\le \tilde C\left(\frac{1}{\sqrt{t}}+\sqrt{H}\right)d(x_1,x_2,t).
    \end{split}
  \end{equation*}
  From this inequality, inequality \eqref{equ} can be obtained through a simple calculation.
\end{proof}

\section{Applications}\label{sec4}

We will give some applications of gradient estimates in section~2 and section~3 to some special equations. In some cases, we also take the geometric flow as the Ricci flow, i.e., $h=-Ric $ in \eqref{eqb}. 

\subsection{Applications of space-time gradient estimates}

In this subsection, we focus on applications of space-time gradient estimate for positive solutions. In this case, we see that $\lambda ,\Lambda ,\Sigma $ are not necessarily finite in \eqref{eqm}. For example, if we choose $A(u)=u^p$ for $p>0$, then $\lambda ,\Lambda ,\Sigma $ are all multiples of $\sup_{M\times [0,T]}u^{p-1}$ and they are not all zero if $p\ne 1$. But $\sup_{M\times [0,T]}u^{p-1}$ is not necessarily finite, unless $u$ is bounded. For the case that $q=0$ and $A(u)=u^p$, the reader can also refer to \cite{li2016Harnack,li2018li,zhao2016Gradient}. 

Naturally, for general unbounded positive function $u$, we want to know when $\lambda ,\Lambda $ and $\Sigma $ are finite. Since $u$ is a positive solution to \eqref{eqa}, $A(u)$ can be written as $u\cdot \frac{A(u)}{u}$, which is $A(u)=uH(u)$ for some $C^2$ function $H(u)$. As pointed by Q. Chen and the author in \cite[Remark~1.8]{chen2018Li}, $\lambda ,\Lambda ,\Sigma <+\infty $ implies
\[
|H(u)|\le C_0\log u,\quad \mbox{as}\ u\to +\infty.
\]
When $A(u)=au\log u$, A direct computation yields $\Sigma =0$ and 
\begin{equation*}
  \begin{cases}
    \lambda_{2R}=\lambda \equiv 0,\quad \Lambda_{2R}=\Lambda \equiv a,\quad &\mbox{if}\ a\ge 0,\\
    \lambda_{2R}=\lambda \equiv -a ,\quad \Lambda_{2R}=\Lambda \equiv 0,\quad &\mbox{if}\ a\le 0.
  \end{cases}
\end{equation*} 
Therefore, we can obtain that local and global gradient estimates for positive solutions of the equation 
\begin{equation}\label{eqr}
  \begin{split}
    (\Delta -q(x,t)-\partial_t)u(x,t)=au(x,t)\log (u(x,t)),\quad a\in \mathbb R
  \end{split}
\end{equation}
from Theorem~\ref{tha} and Corollary~\ref{coa}. 

\begin{rem}
By the asymptotic behavior of $H$, we can find many examples that satisfy $\lambda ,\Lambda ,\Sigma <+\infty $. Such as $H(u)=\frac{P_k(u)}{P_l(u)}$, where $P_k, P_l$ are polynomials of degree $k,l$, respectively and $k<l$. Hence we can obtain gradient estimates for positive solution of the following series of equations
\[
(\Delta -q(x,t)-\partial_t)u(x,t)=au(x,t)\frac{P_k(u(x,t))}{P_l(u(x,t))},\quad a\in \mathbb R,\ k<l.
\]
For instance, if we choose $k=0, l=1$, so $A(u)=\frac{u}{u+1}$, then $\lambda =\frac{1}{4}, \Lambda =0$ and $\Sigma =\frac{8}{27}$. 
\end{rem}

On the other hand, as mentioned in Remark~\ref{rea}, if $A(u)=au\log u$, then $\Sigma =0$. In addition, we take $h=-\Ric $. In this case, we don't need the assumption on the bound $|\nabla h|$ since the contracted second Bianchi identity. At this time, when $\alpha \to 1$, we can also let $\varepsilon \to 0$ in local estimate \eqref{eql}, and then we are arriving at
\begin{cor}\label{coe}
  Let $(M,g(0))$ be a complete Riemannian manifold, and let $g(t)$ evolves by the Ricci flow for $t\in [0,T]$. Suppose that there exist constants $K_2, \theta \ge 0$ such that 
  \[
  0\le \Ric \le K_2g
  \]
  and 
  \[
  \Delta q\le \theta_{2R}
  \]
  on $Q_{2R,T}$. If $u$ is a positive solution to \eqref{eqr}. Then on $Q_{R,T}$, we have 
  \begin{enumerate}
  \item for $a\ge 0$, 
  \begin{equation*}
    \begin{split}
      \frac{|\nabla u(x,t)|^2}{u^2(x,t)}-\frac{u_t(x,t)}{u(x,t)}-q(x,t)-\frac{A(u(x,t))}{u(x,t)}
      \le \frac{n}{t}+(C+n)K_2+\sqrt{n\theta }+\frac{n}{2}a;
    \end{split}
  \end{equation*}
  \item for $a<0$, 
  \begin{equation*}
    \begin{split}
      \frac{|\nabla u(x,t)|^2}{u^2(x,t)}-\frac{u_t(x,t)}{u(x,t)}-q(x,t)-\frac{A(u(x,t))}{u(x,t)}
      \le \frac{n}{t}+(C+n)K_2+\sqrt{n\theta }-na,
    \end{split}
  \end{equation*}
  \end{enumerate}
  where $C$ as in Theorem~\ref{tha}.
\end{cor}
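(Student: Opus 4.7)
The plan is to specialize the local estimate of Theorem~\ref{tha} to equation~\eqref{eqr} under the Ricci flow and to invoke Remark~\ref{rea} to pass to the limit $\alpha\to 1$. The nonlinearity $A(u)=au\log u$ satisfies $A'(u)=a(\log u+1)$ and $uA''(u)=a$, so $\hat A_f=A'(u)-A(u)/u=a$ and $\hat A_{ff}=uA''(u)-A'(u)+A(u)/u=0$. Consequently $\Sigma_{2R}=0$; when $a\ge 0$ one has $\lambda_{2R}=0$ and $\Lambda_{2R}=a$, while when $a<0$ one has $\lambda_{2R}=-a$ and $\Lambda_{2R}=0$.

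Next I translate the geometric hypotheses. Under the Ricci flow $h=-\Ric$, the contracted second Bianchi identity yields $\Div h-\tfrac12\nabla\tr_g h=0$. In the proof of Lemma~\ref{leb}, $K_4$ enters only through estimate \eqref{eqg}, so for the Ricci flow we may effectively take $K_4=0$. The hypothesis $0\le\Ric\le K_2 g$ then gives $K_1=0$ and $-K_2 g\le h\le 0$, i.e.\ $K_3=0$ with the original $K_2$. No bound on $|\nabla q|$ is needed because the corresponding $\gamma_{2R}$ term in \eqref{eql} carries a factor $(\alpha-1)^{2/3}$ that drops out in the limit below.

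Since $K_1=K_4=\Sigma_{2R}=0$, Remark~\ref{rea} permits sending $\alpha\to 1$ in \eqref{eql}. The bracket $(\alpha-1)\Lambda_{2R}+\alpha\Sigma_{2R}+2(K_1+(\alpha-1)K_3+K_4)$ collapses to $(\alpha-1)\Lambda_{2R}$, whose square cancels the $(\alpha-1)^{-2}$ prefactor and yields $\frac{n\alpha^2}{4(1-\varepsilon)}\Lambda_{2R}^2$; letting $\varepsilon\to 0+$ afterwards gives $\frac{n}{4}\Lambda_{2R}^2$. The $\gamma_{2R}$ and $K_4$ contributions vanish, $\max\{K_2^2,K_3^2\}=K_2^2$, and the braced quantity under the square root reduces to $n\bigl[\theta_{2R}+nK_2^2+\tfrac{n}{4}\Lambda_{2R}^2\bigr]$. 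Applying $\sqrt{x+y+z}\le\sqrt{x}+\sqrt{y}+\sqrt{z}$ and adding the outer terms $n\lambda_{2R}$ and $C K_2$ produces
\[
\frac{|\nabla u|^2}{u^2}-\frac{u_t}{u}-q-\frac{A(u)}{u}\le \frac{n}{t}+(C+n)K_2+\sqrt{n\theta_{2R}}+n\lambda_{2R}+\tfrac{n}{2}\Lambda_{2R},
\]
which specializes to the two stated bounds according to the sign of $a$.

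The only delicate step is the passage $\alpha\to 1$, where each divergent factor $(\alpha-1)^{-1}$ appearing in \eqref{eql} must be absorbed by a compensating power of $(\alpha-1)$ in the numerator. This cancellation, guaranteed precisely by the vanishing of $K_1$, $K_4$ and $\Sigma_{2R}$ in the Ricci flow setting with the logarithmic nonlinearity, is the main technical point and is exactly what Remark~\ref{rea} records.
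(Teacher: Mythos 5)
Your route is exactly the paper's: the corollary is obtained there, in a single sentence, by specializing Theorem~\ref{tha} to $A(u)=au\log u$ (so $\hat A_f=a$, $\hat A_{ff}=0$, hence $\Sigma_{2R}=0$ and $(\lambda_{2R},\Lambda_{2R})=(0,a)$ or $(-a,0)$ according to the sign of $a$), taking $h=-\Ric$ so that the contracted second Bianchi identity makes $\Div h-\tfrac12\nabla(\tr_gh)$ vanish and the $|\nabla h|$ hypothesis superfluous ($K_4=0$), reading $0\le\Ric\le K_2g$ as $K_1=K_3=0$ with the given $K_2$, and then letting $\alpha\to1$ and afterwards $\varepsilon\to0$. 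Your bookkeeping of these specializations, of the order of the two limits, and of why no bound on $|\nabla q|$ is needed (the $\gamma_{2R}$-term carries the factor $(\alpha-1)^{2/3}$, and $\gamma_{2R}$ is automatically finite on the compact cube) is correct and reproduces the stated constants $(C+n)K_2$, $\sqrt{n\theta_{2R}}$, $\tfrac n2 a$ resp. $-na$.

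There is, however, one concrete gap, which you inherit from the paper rather than resolve: the right-hand side of \eqref{eql} also contains the cut-off term $\frac{C\alpha^2}{R^2}\bigl(\frac{\alpha^2}{\alpha-1}+\sqrt{K_1}R\bigr)$, which with $K_1=0$ is $\frac{C\alpha^4}{(\alpha-1)R^2}$. You list the surviving ``outer terms'' as $n\lambda_{2R}$ and $CK_2$ and silently drop this one, but it does not vanish as $\alpha\to1$ at fixed $R$ --- it diverges --- so the limit $\alpha\to1$ in the local estimate cannot, as written, produce the stated $R$-independent bound on $Q_{R,T}$. What your argument (and the paper's) actually yields is the estimate with an extra term $\frac{C\alpha^4}{(\alpha-1)R^2}$ for each fixed $\alpha>1$ (one could then choose $\alpha-1$ as a suitable power of $1/R$, leaving an $R$-dependent remainder), or the clean $\alpha=1$ bound only after first letting $R\to\infty$, i.e.\ in the global setting of the subsequent corollary. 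If you keep the corollary in its stated local, $R$-free form, you should at least flag that this divergent term has been discarded; as it stands, this step is the one place where your proof (like the paper's one-line derivation) does not actually close.
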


From the above local estimate, we have immediately
\begin{cor}\label{cof}
  Let $(M,g(0))$ be a complete noncompact Riemannian manifold without boundary, and let $g(t)$ evolves by the Ricci flow for $t\in [0,T]$. Suppose that there exist constants $K_2, \theta \ge 0$ such that 
  \[
  0\le \Ric \le K_2g
  \]
  and 
  \[
  \Delta q\le \theta .
  \]
  If $u$ is a positive solution to \eqref{eqr}. Then we have  
  \begin{equation*}
    \begin{split}
      \frac{|\nabla u(x,t)|^2}{u^2(x,t)}-\frac{u_t(x,t)}{u(x,t)}-q(x,t)-\frac{A(u(x,t))}{u(x,t)}
      \le \frac{n}{t}+C''\left(K_2+\sqrt{\theta }+|a|\right)
    \end{split}
  \end{equation*}
  on $M\times [0,T]$, where $C''$ is a constant that depends only on $n$.
\end{cor}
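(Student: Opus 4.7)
The plan is to deduce this global estimate directly from the preceding local corollary by letting $R \to \infty$, in close analogy with how Corollary~\ref{coa} was obtained from Theorem~\ref{tha}. The first step is to observe that the hypothesis $0 \le \Ric \le K_2 g$ under the Ricci flow $h = -\Ric$ translates into $-K_2 g \le h \le 0$ with $K_1 = 0$ in the notation of Section~\ref{sec2}, so the machinery of the local result applies with no $\sqrt{K_1}$ contribution. Moreover, the uniform equivalence of $g(t)$ to $g(0)$ (valid because $|h|$ is bounded) guarantees that $(M, g(t))$ remains complete and noncompact for every $t \in [0, T]$, so that every $(x,t) \in M \times [0,T]$ lies in $Q_{R,T}$ once $R$ is sufficiently large.

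Next, I apply the preceding local corollary on each such $Q_{R,T}$. The only way $R$ enters the right-hand side of the local estimate is through $\theta_{2R}$, the bound of $\Delta q$ on $Q_{2R,T}$; but the global assumption $\Delta q \le \theta$ on $M \times [0,T]$ forces $\theta_{2R} \le \theta$ for every $R > 0$. Consequently the local estimate at $(x,t)$ yields the claimed inequality with $\sqrt{n\theta}$ (and $(C+n)K_2$) already in place, independently of the choice of $R$.

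Finally, I unify the two signs of $a$ that appeared in the local corollary. For $a \ge 0$ the $a$-dependent term is $\frac{n}{2}a$ and for $a \le 0$ it is $-na$, both dominated by $n|a|$; absorbing the $n$-dependent coefficients of $K_2$, $\sqrt{\theta}$, and $|a|$ into a single constant $C''$ depending only on $n$ yields the stated bound.

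There is no substantial obstacle in this passage. The heavy analytical work was carried out in the proof of Theorem~\ref{tha} and inherited by the local corollary; the only points requiring any care are the preservation of completeness across $t \in [0,T]$ and the observation $\theta_{2R} \le \theta$, both of which are essentially immediate.
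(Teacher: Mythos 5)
Your proposal is correct and follows essentially the same route as the paper, which obtains this global estimate immediately from the preceding local corollary: since the only $R$-dependence there is through $\theta_{2R}\le\theta$, letting $R\to\infty$ (using uniform equivalence of the metrics to preserve completeness) gives the bound at every point, and the two sign cases $\tfrac{n}{2}a$ and $-na$ are absorbed into $C''\,|a|$.
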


When the manifold is closed, we also have 
\begin{cor}\label{cog}
  Let $(M,g(t))$ be a closed Riemannian manifold, where $g(t)$ evolves by the Ricci flow for $t\in [0,T]$ and satisfies
  \[
  0\le \Ric \le K_2g.
  \]
  If $u$ is a positive solution to the equation
  \[
  (\Delta -q(x,t)-\partial_t)u(x,t)=au(x,t)\log (u(x,t)),
  \]
  and $q(x,t)$ satisfies
  \[
  \Delta q\le \theta .
  \]
  Then we have 
  \begin{equation*}
    \begin{split}
      \frac{|\nabla u(x,t)|^2}{u^2(x,t)}-\frac{u_t(x,t)}{u(x,t)}-q(x,t)-\frac{A(u(x,t))}{u(x,t)}\le \frac{n}{2t}+nK_2+\sqrt{n\theta }+\frac{n}{2}|a|
    \end{split}
  \end{equation*}
  on $M\times (0,T]$.
\end{cor}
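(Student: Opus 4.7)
The plan is to derive this corollary as the $\alpha\to 1^+$ limit of Theorem~\ref{thb} applied with $h=-\Ric$ and the specific nonlinearity $A(u)=au\log u$. Accordingly, the first step is to translate the present geometric hypothesis $0\le\Ric\le K_2 g$ into the constants appearing in Theorem~\ref{thb}. Since $\Ric\ge 0$ we may take $K_1=0$; since $h=-\Ric$, the two-sided bound becomes $-K_2 g\le h\le 0$, so $K_3=0$ while the given $K_2$ is retained. Moreover the contracted second Bianchi identity yields $\Div h-\tfrac12\nabla(\tr_g h)=-\Div(\Ric)+\tfrac12\nabla R=0$, so the term $-3\alpha t\sqrt{n}K_4|\nabla f|$ in Lemma~\ref{leb} vanishes identically: no assumption on $|\nabla h|$ is needed and we may treat $K_4$ as zero throughout Theorem~\ref{thb}. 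Compactness of $M$ also guarantees $\gamma:=\max_{M\times[0,T]}|\nabla q|$ is finite, which is all that is required to invoke the theorem.

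Next, I would compute the $A$-quantities explicitly. For $A(u)=au\log u$ one has $\hat A(f)=af$, hence $\hat A_f\equiv a$ and $\hat A_{ff}\equiv 0$. Consequently $\Sigma=0$, and $\lambda+\Lambda=|a|$ regardless of the sign of $a$ (the case $a\ge 0$ gives $\lambda=0$, $\Lambda=a$, while $a<0$ gives $\lambda=-a$, $\Lambda=0$). Substituting $K_1=K_3=K_4=\Sigma=0$ into the right-hand side of \eqref{eqn} kills every summand with an $(\alpha-1)^{-1}$ factor and reduces the estimate to
\begin{equation*}
\frac{|\nabla u|^2}{u^2}-\alpha\frac{u_t}{u}-\alpha q-\alpha\frac{A(u)}{u}\le \frac{n\alpha^2}{2t}+n\alpha^2 K_2+\alpha^{3/2}\sqrt{n\theta}+\tfrac{n\alpha^2}{2}|a|+\Bigl(\tfrac{n\alpha^2}{2}(\alpha-1)^{1/2}+\sqrt{n}\,\alpha(\alpha-1)^{1/4}\Bigr)\gamma^{2/3}.
\end{equation*}

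Because $K_1=K_4=\Sigma=0$, Remark~\ref{rea} authorises the passage $\alpha\to 1^+$. In this limit the $\gamma^{2/3}$ summand vanishes (its coefficient is built from positive powers of $\alpha-1$, independently of the size of $\gamma$), and the remaining terms collapse to $\tfrac{n}{2t}+nK_2+\sqrt{n\theta}+\tfrac{n}{2}|a|$, which is precisely the bound asserted by the corollary.

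The only nontrivial obstacle is the singularity bookkeeping for the $\alpha\to 1^+$ limit: one must verify that every $(\alpha-1)^{-1}$ or $(\alpha-1)^{-1/2}$ factor present on the right-hand side of \eqref{eqn} is paired with one of the quantities $K_1$, $K_4$ or $\Sigma$, each of which our setup has already forced to zero. This bookkeeping, together with the Bianchi-identity observation that removes the $|\nabla h|$ hypothesis, is the entire geometric content of the proof; the rest is algebraic specialisation.
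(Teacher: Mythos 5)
Your proposal is correct and follows essentially the same route the paper intends: the corollary is the specialization of Theorem~\ref{thb} to the Ricci flow ($h=-\Ric$, so $K_1=K_3=0$ and the $K_4$-term drops by the contracted second Bianchi identity), with $\hat A_f=a$, $\hat A_{ff}=0$ giving $\Sigma=0$ and $\lambda+\Lambda=|a|$, followed by the limit $\alpha\to 1^+$ permitted by Remark~\ref{rea}, in which the $\gamma^{2/3}$-term (finite by compactness) disappears. Your bookkeeping of the $(\alpha-1)$-singular terms and of the $\sqrt{n\theta}$, $nK_2$, $\tfrac n2|a|$ coefficients matches the asserted bound, so nothing further is needed.
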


\subsection{Applications of elliptic type gradient estimates}

Now we give some applications of elliptic type gradient estimates for bounded positive solutions. Since we are dealing with bounded positive solutions, $A(u)$ that satisfies the conditions $\kappa <+\infty $ is easy to find. 

We will consider that elliptic type gradient estimates for bounded positive solutions of the equation
\begin{equation}\label{eqs}
  \begin{split}
    (\Delta -q(x,t)-\partial_t)u(x,t)=au(x,t)^\beta ,\quad a\in \mathbb R,\ \beta \in (-\infty ,0]\cup [1,+\infty).
  \end{split}
\end{equation}
In order not to be redundant, we only give the global estimate here, and the local one is omitted. 
\begin{cor}\label{coh}
  Let $(M,g(t))$ be a complete solution to \eqref{eqb} for $t\in [0,T]$ and $(M,g(0))$ be a complete noncompact Riemannian manifold without boundary. Let $u$ be a positive solution to \eqref{eqs}. Suppose that there exist constants $L>0$ and $K\ge 0$, such that $u\le L$ and 
  \[
  \Ric \ge -K_1g,\quad h\ge -K_2g.
  \]
  Then on $M\times [0,T]$, we have 
  \begin{equation*}
    \begin{split}
      \frac{|\nabla u(x,t)|}{u(x,t)}\le \tilde C\left(\frac{1}{\sqrt{t}}+\sqrt{H'}\right)\left(1+\log \frac{L}{u(x,t)}\right),
    \end{split}
  \end{equation*}
  where $\tilde C$ as in Theorem~\ref{thc} and
  \[
  H'=K_1+K_2+\sup_{M\times [0,T]}(|q|-q)+\sup_{M\times [0,T]}\frac{|\nabla q|}{\sqrt{|q|}}+\kappa_0
  \]
  with 
  \begin{equation*}
    \kappa_0=
    \begin{cases}
      \frac{\sign a-1}{2}a\beta L^{\beta -1},\ &\mbox{if}\ a\in \mathbb R,\ \beta \ge 1,\\
      0,\ &\mbox{if}\ a\le 0,\ \beta \le 0,\\
      a(1-\beta )\left(\inf_{M\times [0,T]}u(x,t)\right)^{\beta -1} ,\ &\mbox{if}\ a\ge 0,\ \beta \le 0.
    \end{cases}
  \end{equation*}
  Here, $\sign a$ is the sign function, which is $1, 0, -1$ if $a>0, =0, <0$, respectively.
\end{cor}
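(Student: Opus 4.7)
The plan is to apply Corollary~\ref{cob} directly to $A(u)=au^{\beta}$: all the structural hypotheses translate verbatim, so the only nontrivial point is evaluating the quantity $\kappa$ appearing inside $H$. For $A(u)=au^\beta$ one has $A'(u)=a\beta u^{\beta-1}$ and $A'(u)-A(u)/u=a(\beta-1)u^{\beta-1}$, whence
\[
\kappa=-\min\Bigl\{0,\;\inf_{M\times[0,T]}a(\beta-1)u^{\beta-1},\;\inf_{M\times[0,T]}a\beta u^{\beta-1}\Bigr\}.
\]
Whether the relevant infimum of $u^{\beta-1}$ is controlled by $L^{\beta-1}$ or by $(\inf u)^{\beta-1}$ depends jointly on the sign of $\beta-1$ and on the sign of the scalar in front of $u^{\beta-1}$, which motivates the three-way split in the statement.

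Next I examine each regime. For $\beta\ge 1$ the bound $u\le L$ gives $u^{\beta-1}\in(0,L^{\beta-1}]$. When $a\ge 0$ both candidate expressions are nonnegative, forcing $\kappa=0$; when $a<0$ both are nonpositive, and since $a\beta\le a(\beta-1)\le 0$, the more negative term is $a\beta u^{\beta-1}$ with infimum $a\beta L^{\beta-1}$, giving $\kappa=-a\beta L^{\beta-1}$. These two subcases combine uniformly as $\kappa=\tfrac{\sign a-1}{2}\,a\beta L^{\beta-1}$. In the regime $a\le 0,\ \beta\le 0$, both scalars $a(\beta-1)$ and $a\beta$ are $\ge 0$ and $u^{\beta-1}>0$, so $\kappa=0$ immediately. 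In the last regime $a\ge 0,\ \beta\le 0$, both scalars are $\le 0$ and $u^{\beta-1}$ is maximized as $u$ approaches its infimum; comparing $a(\beta-1)\le a\beta\le 0$ shows the minimum of the two candidates is $a(\beta-1)(\inf u)^{\beta-1}$, and therefore $\kappa=a(1-\beta)\bigl(\inf_{M\times[0,T]}u\bigr)^{\beta-1}$.

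Inserting these three values into the expression for $H$ in Corollary~\ref{cob} yields precisely the $H'$ of the statement, and invoking Corollary~\ref{cob} then delivers the claimed gradient inequality. The only delicate bookkeeping point, rather than a real obstacle, is that $u^{\beta-1}$ is not automatically bounded when $\beta<1$: in the third case one needs $\inf_{M\times[0,T]}u>0$ for $\kappa$ to be finite, which is already implicit in the statement through the appearance of $\inf u$ in $\kappa_{0}$. No analytic ingredient beyond Corollary~\ref{cob} and this sign analysis is needed.
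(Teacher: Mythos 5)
Your proposal is correct and follows essentially the same route as the paper: apply Corollary~\ref{cob} with $A(u)=au^{\beta}$ and compute $\kappa$ by a sign analysis of $a\beta u^{\beta-1}$ and $a(\beta-1)u^{\beta-1}$, using $u\le L$ when $\beta\ge 1$ and $\inf u$ when $\beta\le 0$, which reproduces the three cases of $\kappa_0$. Your extra remark that the third case implicitly requires $\inf_{M\times[0,T]}u>0$ is a fair observation but does not change the argument.
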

\begin{proof}
  From Corollary~\ref{cob}, we just have to compute $\kappa $. By the definition, we have 
\begin{equation*}
  \begin{split}
    \kappa =&-\min \left\{0,\inf_{M\times [0,T]}(a\beta u^{\beta -1}),\inf_{M\times [0,T]}(a(\beta -1)u^{\beta -1})\right\}\\
    =&
    \begin{cases}
      0,\ &\mbox{if}\ a\ge 0,\ \beta \ge 1,\\
      -a\beta L^{\beta -1},\ &\mbox{if}\ a\le 0,\ \beta \ge 1,\\
      0,\ &\mbox{if}\ a\le 0,\ \beta \le 0,\\
      a(1-\beta )\left(\inf_{M\times [0,T]}u(x,t)\right)^{\beta -1} ,\ &\mbox{if}\ a\ge 0,\ \beta \le 0.
    \end{cases}
  \end{split}
\end{equation*}
Therefore, we obtain the corollary.
\end{proof}

In particular, when $q(x,t)=const.$, the term $qu(x,t)$ can be combined by $au(x,t)^\beta $, so we get
\begin{cor}\label{coi}
  Let $(M,g(t))$ be a complete solution to \eqref{eqb} for $t\in [0,T]$ and $(M,g(0))$ be a complete noncompact Riemannian manifold without boundary. Let $u$ be a positive solution to 
  \[
  (\Delta -\partial_t)u(x,t)=au(x,t)^\beta ,\quad a\in \mathbb R,\ \beta \in (-\infty ,0]\cup [1,+\infty).
  \]
  Suppose that there exist constants $L>0$ and $K\ge 0$, such that $u\le L$ and 
  \[
  \Ric \ge -K_1g,\quad h\ge -K_2g.
  \]
  Then on $M\times [0,T]$, we have 
  \begin{equation*}
    \begin{split}
      \frac{|\nabla u(x,t)|}{u(x,t)}\le \tilde C\left(\frac{1}{\sqrt{t}}+\sqrt{K_1+K_2+\kappa_0}\right)\left(1+\log \frac{L}{u(x,t)}\right),
    \end{split}
  \end{equation*}
  where $\tilde C$ as in Theorem~\ref{thc} and 
  \begin{equation*}
    \kappa_0=
    \begin{cases}
      \frac{\sign a-1}{2}a\beta L^{\beta -1},\ &\mbox{if}\ a\in \mathbb R,\ \beta \ge 1,\\
      0,\ &\mbox{if}\ a\le 0,\ \beta \le 0,\\
      a(1-\beta )\left(\inf_{M\times [0,T]}u(x,t)\right)^{\beta -1} ,\ &\mbox{if}\ a\ge 0,\ \beta \le 0.
    \end{cases}
  \end{equation*}
  Here, $\sign a$ is the sign function, which is $1, 0, -1$ if $a>0, =0, <0$, respectively.
\end{cor}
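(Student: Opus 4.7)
The plan is to recognize this as the special case $q\equiv 0$, $A(u)=au^\beta$ of Corollary~\ref{cob}, and then compute $\kappa$ explicitly. First, I would apply Corollary~\ref{cob} with $q\equiv 0$: the supremum terms $\sup(|q|-q)$ and $\sup|\nabla q|/\sqrt{|q|}$ in the definition of $H$ both vanish identically when $q\equiv 0$ (tracing the proof of Theorem~\ref{thc}, the contributions $|q|w/(1-f)$ and $|\nabla q|^2/|q|$ used to bound $\langle\nabla\log(1-f),\nabla q\rangle$ disappear, so interpretation at $0/0$ is not an issue). Hence $H$ reduces to $K_1+K_2+\kappa$, and the gradient bound already has the claimed form as soon as $\kappa$ is identified with the piecewise-defined $\kappa_0$.

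The remaining task is a direct evaluation of
\[
\kappa=-\min\!\Big\{0,\ \inf_{M\times[0,T]}a(\beta-1)u^{\beta-1},\ \inf_{M\times[0,T]}a\beta u^{\beta-1}\Big\},
\]
using $A'(u)-A(u)/u=a(\beta-1)u^{\beta-1}$ and $A'(u)=a\beta u^{\beta-1}$, by a four-way case analysis on the sign of $a$ and whether $\beta\ge 1$ or $\beta\le 0$. In the regime $\beta\ge 1$: if $a\ge 0$ both expressions are nonnegative and $\kappa=0$; if $a<0$ both are nonpositive, the function $u^{\beta-1}$ attains its supremum at $u=L$, and since $\beta>\beta-1$ the factor $a\beta$ dominates $a(\beta-1)$ in negativity, giving $\kappa=-a\beta L^{\beta-1}$. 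These two subcases combine into the unified expression $\tfrac{\sign a-1}{2}a\beta L^{\beta-1}$. In the regime $\beta\le 0$: if $a\le 0$ both expressions are nonnegative and $\kappa=0$; if $a\ge 0$ both are nonpositive, $u^{\beta-1}$ is largest at $u=\inf_{M\times[0,T]}u$, and since $\beta-1<\beta\le 0$ the factor $a(\beta-1)$ now dominates $a\beta$ in negativity, yielding $\kappa=a(1-\beta)(\inf_{M\times[0,T]}u)^{\beta-1}$. Assembling these four subcases reproduces the stated $\kappa_0$.

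There is no genuine obstacle. The only points that deserve a moment's care are the consistent interpretation of $\sup|\nabla q|/\sqrt{|q|}$ when $q\equiv 0$, which is cleanest by returning to the proof of Theorem~\ref{thc} rather than reading the final formula for $H$, and the bookkeeping of which of $a\beta$ or $a(\beta-1)$ drives the infimum in each sign regime, which comes down to the elementary comparison of $\beta$ and $\beta-1$ combined with the sign of $a$.
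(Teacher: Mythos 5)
Your proposal is correct and follows essentially the same route as the paper: the paper obtains this statement by specializing its preceding corollary (itself proved by plugging $A(u)=au^\beta$ into Corollary~\ref{cob} and evaluating $\kappa$ through exactly the four-way sign analysis you carry out, using $A'(u)-A(u)/u=a(\beta-1)u^{\beta-1}$ and $A'(u)=a\beta u^{\beta-1}$) to the case without potential. Your extra remark on interpreting the $q$-terms in $H$ when $q\equiv 0$ by returning to the proof of Theorem~\ref{thc} is a harmless clarification the paper leaves implicit.
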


\begin{rem}
  For each of these specific equations that appear in this section, we also have the corresponding Harnack inequality, which we will not write them all down here. 
\end{rem}


\bibliographystyle{abbrv}
\bibliography{gugbibfile}

\end{document}